\newtheorem{thm}{Theorem}[section]
\newtheorem{lem}[thm]{Lemma}
\newtheorem{prop}[thm]{Proposition}
\newtheorem{cor}[thm]{Corollary}
\theoremstyle{definition}
\theoremstyle{remark}
\newtheorem{rem}[thm]{Remark}
\numberwithin{equation}{section}
\newcommand{\bbz}{\mathbb{Z}}
\newcommand{\bbn}{\mathbb{N}}
\begin{document}
\title
[$\Delta$-weakly mixing subsets along a collection of sequences]
{$\Delta$-weakly mixing subsets along a collection of sequences of integers}
\author[J. Li] {Jian Li}
\address[J. Li]{
Department of Mathematics, Shantou University, Shantou, Guangdong  515063, People's Republic of China}
\email{lijian09@mail.ustc.edu.cn}
\author[K. Liu]{Kairan Liu}
\address[K. Liu]{
Department of Mathematics, University of Science and Technology of China, Hefei, Anhui 230026, People's Republic of China}
\email{lkr111@mail.ustc.edu.cn}

\keywords{Positive topological entropy, Pinsker $\sigma$-algebra, $\Delta$-weakly mixing subset,  characteristic $\sigma$-algebra,
good sequences for $\liminf$-$\ell$-recurrence}
\subjclass[2010]{37A35,37B05,37B40}
\date{\today}

\begin{abstract}
In this paper, we propose a mild condition, named Condition $(**)$,
for collections of sequence of integers and show that
for any measure preserving system
the Pinsker $\sigma$-algebra is a characteristic $\sigma$-algebra
for the averages along a collection satisfying Condition $(**)$.
We introduce the notion of $\Delta$-weakly mixing subsets
along a collection of sequences of integers
and show that positive topological entropy implies
the existence of $\Delta$-weakly mixing subsets
along a collection of ``good'' sequences.
As a consequence, we show that
positive topological entropy implies
multi-variant Li-Yorke chaos along polynomial times of the shift prime numbers.
\end{abstract}

\maketitle

\section{Introduction}
Throughout this paper, by a \textit{topological dynamical system}, we mean a pair $(X, T)$, where $X$ is a compact metric space and $T\colon X \to X$ is a homeomorphism, and by a \textit{measure preserving system}, we mean a quadruple $(X,\mathscr{B},\mu,T)$,
where $(X,\mathscr{B})$ is a standard Borel space, $\mu$ is a probability measure on $(X,\mathscr{B})$
and $T\colon (X,\mathscr{B},\mu) \to (X,\mathscr{B},\mu)$ is an invertible measure preserving transformation.

Chaos, as an important concept representing complexity of topological dynamical system, has attracted a lot of attention. Different versions of chaos, such as Li-Yorke chaos, Devaney chaos, positive entropy and weak mixing were proposed over the past few decades, and the implication among them became a central topic as well. See a recent survey \cite{LY16} and references therein for more details. Here we name a few related to this work.

In \cite{Iwanik}, Iwanik proved that weak mixing implies Li-Yorke chaos. By showing that a non-periodic transitive system with a periodic point is Li-Yorke chaotic, Huang and Ye proved that Devaney chaos also implies Li-Yorke one (see \cite{Huang-Ye1}).
In \cite{Blanchard-Glansner-Kolyada-Maass}, Blanchard et al. proved that positive topological entropy implies Li-Yorke chaos,
see \cite{Kerr-li}, \cite{Huang-Xu-Yi} and \cite{Wang-Zhang} for amenable group actions. In \cite{D14}, Downarowicz proved that positive topological entropy implies mean Li-Yorke chaos, see also \cite{HLY14} for another approach.

In \cite{Xiong-Yang}, Xiong and Yang showed that in a weakly mixing system there are considerably many points in the domain whose orbits display highly erratic time dependence, which is called Xiong chaos. In \cite{Blanchard-Huang}, Blanchard and Huang defined a local version of weak mixing, so called weakly mixing set, and proved that positive topological entropy implies the existence of weakly mixing sets which also implies Li-Yorke chaos. In \cite{Piotr-Zhang(1),Piotr-Zhang(2),Piotr-Zhang(3)}, Oprocha and Zhang had also discussed local versions of weak mixing extensively.  In \cite{HLYZ17}, the first named author of the present paper with coauthors studied the $\Delta$-weakly mixing  property and showed that a topological dynamical system with positive topological entropy has many $\Delta$-weakly mixing subsets. Recently in \cite{L19}, the second named author of the present paper extended the results in \cite{HLYZ17} to countable torsion-free discrete nilpotent group actions.

Recently, there are some papers which studied the chaos phenomenons along some subsequences of integers.  In fact, any system containing a non-trivial weakly mixing subset has infinite topological sequence entropy, see \cite[Theorem 6.1]{Piotr-Zhang(2)}. In \cite{L15}, the first named author studied weakly mixing subsets via Furstenberg families (collections of subsequence of integers). In \cite{Li-Qiao}, the first named author with Qiao showed that positive topological entropy implies mean Li-Yorke chaos along some sequences which are good for pointwise ergodic convergence with a mild condition. In \cite{HLY18}, the first named author with coauthors showed that positive topological entropy implies Li-Yorke chaos along  any infinite sequence for countable discrete amenable group actions. In \cite{ZWX19}, Zhang et al. studied $\Delta$-mixing properties via Furstenberg families. The main aim of this paper is to study $\Delta$-weakly mixing set along a collection of sequences of integers and show that positive topological entropy implies the existence of $\Delta$-weakly mixing sets along
a collection of ``good'' sequences.

To introduce our main results, we need some preparations. Let $\bbn$ denote the collection of all positive integers. For a topological dynamical system $(X,T)$ and $m\in\bbn$, the $m$-th product system $(X^m,T^{(m)})$ is also a topological dynamical system, where $X^m=X\times X\times \dotsb\times X$, and $T^{(m)}(x_1,x_2,\dotsc,x_m)=(Tx_1,Tx_2,\dotsc,Tx_m)$ for any $(x_1,x_2,\dotsc,x_m)\in X^m$. We say that a topological dynamical system $(X, T)$ is \emph{(topologically) transitive} if for every two non-empty open subsets $U$ and $V$ of $X$ there exists a positive integer $n$ such that $U\cap T^{-n}V\neq\emptyset$.
It is \emph{(topologically) weakly mixing} if the product system $(X^2, T^{(2)})$ is transitive. By the well-known Furstenberg intersection Lemma, we know that if $(X, T)$ is weakly mixing then $(X^m,T^{(m)})$ is also transitive for all $m\in\bbn$.

Following \cite{M10}, we say that a topological dynamical system $(X, T)$ is \emph{$\Delta$-transitive} if for every $\ell\in\bbn$,
there exists a residual subset $X_0$ of $X$ such that for every $x\in X$,
\[\lbrace (T^nx,T^{2n}x,\dotsc, T^{\ell n} x)\colon n\in\bbn\rbrace \]
is dense in $X^\ell$. In \cite{G94}, Glasner showed that if a minimal system is weakly mixing then it is $\Delta$-transitive.
Recently in \cite{HSY16}, Huang, Shao and Ye showed that if a minimal system is weakly mixing then for every $\ell\in\bbn$ and distinct  non-constant polynomials $p_1(n),p_2(n),\dotsc,p_\ell(n)$ with rational coefficients taking integer values on the integers and $p_i(0)=0$ for $i=1,\dotsc,\ell$, there exists a residual subset $X_0$ of $X$ such that for every $x\in X$,
\[\lbrace (T^{p_1(n)}x,T^{p_2(n)}x,\dotsc, T^{p_l(n)} x)\colon n\in\bbz \rbrace\]
is dense in $X^\ell$. In fact, they proved the result for finitely generated nilpotent group actions, see \cite{HSY16} for more details. This motivates us to introduce $\Delta$-transitive and $\Delta$-weakly mixing set along a collection of sequences of integers.

Let $\ell\in\bbn$ and $\Lambda=\{a_1,\dotsc,a_\ell\colon\bbn\to\bbn\}$ be a collection of sequences. Assume that $(X,T)$ is a topological dynamical system and $E$ is a closed subset of $X$.
We say that $E$ is \emph{$\Delta$-transitive along $\Lambda$}
if there exists a residual subset $A$ of $E$ such that for every $x\in A$ one has
\[ E^\ell  \subseteq \overline{\{ (T^{a_1(n)}x,\dotsc,T^{a_\ell(n)}x)\colon n\in\bbn\}}, \]
and \emph{$\Delta$-weakly mixing along $\Lambda$} if $E^m$ is a $\Delta$-transitive subset of $(X^m,T^{(m)})$ along $\Lambda$ for every $m\in\bbn$.

To study the existence of $\Delta$-weakly mixing sets in dynamical system with positive topological entropy, more things should be involved. Given a measure preserving system  $(X,\mathscr{B},\mu,T)$, we say that a sub-$\sigma$-algebra $\mathscr{F}$ of $\mathscr{B}$ is a \emph{characteristic factor} for the averages along the collection $\Lambda=\{a_1,\dotsc,a_\ell\}$ of sequences if for any $f_1,\dotsc,f_\ell \in L^\infty(\mu)$, one has
\[
\lim\limits_{N\to\infty}\frac{1}{N}\sum_{n=1}^{N}\bigg(\prod_{i=1}^{\ell}T^{a_i(n)}f_i-\prod_{i=1}^{\ell}T^{a_i(n)}
 E(f_i|\mathscr{F})\bigg)=0
 \]
in $L^2(\mu)$.

In \cite{Li-Qiao}, the first named author with Qiao introduced the following Condition $(*)$ for a sequence $a(n)$ of positive integers.

\medskip
\noindent\textbf{Condition ($*$).}
For every $L>0$,
\[\lim_{N\to\infty}\frac{1}{N^2}\#\{(m,n)\in[1,N]^2:\,|a(m)-a(n)|\leq L\}=0.\]
It is shown in \cite[Theorem 3.2]{Li-Qiao} that
if a sequence $a(n)$ satisfies the Condition ($*$) then
the Pinsker $\sigma$-algebra of any measure preserving system is a characteristic factor
for the average along the sequence $a(n)$.
Here we introduce the following Condition $(**)$ for a collection $\Lambda=\{a_1,\dotsc,a_\ell\}$ of sequences of positive integers.

\medskip
\noindent\textbf{Condition ($**$).}
For each $i=1,2,\dotsc,\ell$, sequence $a_i(n)$ satisfies the Condition ($*$) and
\[
\lim_{n\to\infty }(a_{i+1}(n)-a_i(n))=+\infty \textrm{ for } i=1,\dotsc,\ell-1,
\]

We have the following improvement of \cite[Theorem 3.2]{Li-Qiao},  which is also of independent  interest.

\begin{thm}\label{main-thm1}
If a collection $\Lambda=\{a_1,\dotsc,a_\ell\}$ of sequences of positive integers  satisfies the Condition ($**$),
then the Pinsker $\sigma$-algebra of any measure preserving system
is a characteristic factor for the average along $\Lambda$.
\end{thm}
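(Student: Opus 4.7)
The plan is to prove the theorem by induction on $\ell$, with the base case $\ell = 1$ being Theorem 3.2 of \cite{Li-Qiao}. Write $\mathscr{P}$ for the Pinsker $\sigma$-algebra. First I would use the standard telescoping identity
\begin{align*}
\prod_{i=1}^{\ell}T^{a_i(n)}f_i - \prod_{i=1}^{\ell}T^{a_i(n)}E(f_i|\mathscr{P})
&= \sum_{k=1}^{\ell} \Big(\prod_{i<k} T^{a_i(n)} E(f_i|\mathscr{P})\Big) \\
&\quad\cdot T^{a_k(n)}\bigl(f_k-E(f_k|\mathscr{P})\bigr) \cdot \Big(\prod_{i>k} T^{a_i(n)} f_i\Big),
\end{align*}
together with the uniform $L^\infty$ bounds on all factors, to reduce matters to showing that whenever $E(f_k|\mathscr{P}) = 0$ for some $k \in \{1,\dotsc,\ell\}$, one has
\[
\lim_{N\to\infty} \Big\| \frac{1}{N}\sum_{n=1}^{N} \prod_{i=1}^{\ell} T^{a_i(n)} f_i \Big\|_{L^2(\mu)} = 0.
\]

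To establish this, I would apply van der Corput's lemma in $L^2(\mu)$ to $u_n := \prod_{i=1}^{\ell}T^{a_i(n)}f_i$: it suffices to prove
\[
\lim_{H\to\infty} \frac{1}{H}\sum_{h=1}^{H} \Big| \limsup_{N\to\infty} \frac{1}{N}\sum_{n=1}^{N} \langle u_{n+h}, u_n \rangle \Big| = 0.
\]
Using measure preservation to translate by $T^{-a_k(n)}$, each inner product $\langle u_{n+h},u_n\rangle$ becomes
\[
\int \bar f_k \cdot \prod_{i=1}^{\ell} T^{a_i(n+h)-a_k(n)} f_i \cdot \prod_{\substack{i=1\\ i\neq k}}^{\ell} T^{a_i(n)-a_k(n)} \bar f_i \, d\mu,
\]
so that for each fixed $h$ the problem reduces to pairing $\bar f_k$ against an $n$-average of a multiple product along the $(2\ell-1)$ \emph{difference sequences} $\{a_i(n+h)-a_k(n)\}_{i=1}^{\ell}$ and $\{a_i(n)-a_k(n)\}_{i\neq k}$. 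The growth condition $a_{i+1}(n)-a_i(n)\to\infty$ forces these difference sequences, after being re-listed in increasing order of magnitude, to satisfy Condition $(**)$ anew for every sufficiently large $h$. Applying the inductive hypothesis to this new collection forces the limit of the $n$-average to lie in $L^2(\mathscr{P})$, so its pairing with $\bar f_k$ vanishes because $E(f_k|\mathscr{P}) = 0$.

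The main obstacle is closing the induction: naively, the van der Corput step \emph{increases} the number of sequences from $\ell$ to $2\ell-1$, so induction on $\ell$ alone is insufficient. I would therefore induct on a more refined complexity invariant of the collection (in the spirit of PET induction), for which the crucial monotonicity comes precisely from the divergence $a_{i+1}(n)-a_i(n)\to\infty$: that gap ensures each successive differencing step genuinely simplifies the collection rather than just expanding it. A second technical point, which should be more routine, is verifying that Condition $(*)$ is inherited by the relevant difference sequences for a Cesaro-large set of $h$, which reduces to a pigeonhole count applied to Condition $(*)$ for each individual $a_i$ together with the pairwise separation $a_{i+1}-a_i\to\infty$.
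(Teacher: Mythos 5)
Your telescoping reduction and the choice of base case are fine, but the van der Corput step contains a genuine gap that I do not believe can be repaired along the lines you sketch: the claim that the difference sequences inherit Condition $(**)$ is simply false. Condition $(**)$ requires \emph{each} sequence in the new collection to satisfy Condition $(*)$, and differences of two sequences from a Condition-$(**)$ collection need not do so. Concretely, take $\ell=2$, $a_1(n)=n^2$ and $a_2(n)=n^2+\lfloor\log n\rfloor+2$. Both satisfy Condition $(*)$ (off-diagonal differences grow at least linearly in $\min(m,n)$), and $a_2(n)-a_1(n)=\lfloor\log n\rfloor+2\to\infty$, so $\Lambda=\{a_1,a_2\}$ satisfies Condition $(**)$. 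But the unshifted difference sequence $a_2(n)-a_1(n)=\lfloor\log n\rfloor+2$, which appears in your inner product after translating by $T^{-a_1(n)}$ (a sequence with the same absolute values appears if you translate by $T^{-a_2(n)}$ instead), fails Condition $(*)$: for $L\geq 1$, all pairs $(m,n)$ lying in a common interval $[e^j,e^{j+1})\subset[1,N]$ satisfy $\lfloor\log m\rfloor=\lfloor\log n\rfloor$, and the largest such interval already contributes a proportion of $[1,N]^2$ bounded away from zero. So the collection of $2\ell-1$ difference sequences need not satisfy Condition $(**)$, and the inductive hypothesis cannot be invoked. Your own second worry is equally fatal: PET induction terminates for polynomials because differencing lowers degree, but sequences satisfying Condition $(**)$ carry no analogous structure, and the divergence $a_{i+1}(n)-a_i(n)\to\infty$ supplies no complexity invariant that decreases under differencing --- the example above shows differencing can throw you \emph{out of the class entirely}, not merely fail to simplify. (There is also the minor issue that the difference sequences for $i<k$ are negative, while Condition $(**)$ is stated for positive sequences.)

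For comparison, the paper's proof avoids van der Corput and induction on $\ell$ altogether, and its use of Condition $(**)$ is of a different nature. It invokes the Rohlin--Sinai theorem to write $P_\mu(T)=\bigcap_{k\geq 0}T^{-k}\mathscr{P}$ for a sub-$\sigma$-algebra $\mathscr{P}$ with $T^{-1}\mathscr{P}\subset\mathscr{P}$ and $\bigvee_{k\geq 1}T^k\mathscr{P}=\mathscr{B}$, reduces by the martingale theorem and density to $\mathscr{P}$-measurable $f_i$ with $E(f_i|T^{-m}\mathscr{P})$ close to $E(f_i|P_\mu(T))$, and then expands the square of the $L^2$ norm of the averaged telescoped term into a double sum over pairs $(n,k)$. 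The gap condition is used for \emph{measurability}, not induction: when $a_i(n)>a_i(k)+m$, every factor except $T^{a_i(k)}f_i$ is $T^{-(a_i(k)+m)}\mathscr{P}$-measurable (this is exactly where $a_{j}(n)-a_{j-1}(n)\to\infty$ enters), so conditioning on $T^{-(a_i(k)+m)}\mathscr{P}$ makes the cross terms cancel identically; the exceptional pairs with $|a_i(n)-a_i(k)|\leq m$ have vanishing density by Condition $(*)$. Any repair of your outline would essentially have to replace the appeal to an inductive hypothesis by this kind of direct relative-orthogonality computation, at which point the van der Corput step becomes unnecessary.
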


Another important ingredient of our method is the following notion.
We say that a collection $\Lambda=\{a_1,\dotsc,a_\ell\}$ of sequences of positive integers  is \emph{good for $\liminf$-$\ell$-recurrence}
if for any measure preserving system $(X,\mathscr{B},\mu,T)$, and every $A\in\mathcal{B}$ with $\mu(A)>0$ one has
\[
\liminf_{N\to\infty}\frac{1}{N}\sum_{n=1}^{N}
\mu(A\cap T^{-a_1(n)}A\cap \dotsb \cap T^{-a_\ell(n)}A)>0.
\]

In fact, by von Neumann ergodic theory, we know that
the sequence $\{n\}_{n=1}^{\infty}$ is good for $\liminf$-$1$-recurrence.
By the seminal work of Furstenberg \cite{F77},
for any $\ell\in\bbn$, the collection $\{n,2n,\dotsc,\ell n\colon n\in \bbn\}$ is good for $\liminf$-$\ell$-recurrence.
In \cite{Bereelson-Leibman},
Bergelson and Leibman showed that for any $\ell\in\bbn$
and any polynomials $p_1(n),\dotsc,p_\ell(n)$
with rational coefficients taking integer values on the integers and
$p_i(0) = 0$ for $i=1,\dotsc,\ell$, the collection $\big\{p_1(n),\dotsc,p_\ell(n)\}$ is good for $\lim\inf$-$\ell$-recurrence.
In \cite{Wooley-Ziegler}, Wooley and Ziegler showed that
for any $\ell\in\bbn$
and any polynomials $p_1(n),\dotsc,p_\ell(n)$
with rational coefficients taking integer values on the integers and
$p_i(0) = 0$ for $i=1,\dotsc,\ell$,
$\big\{p_1(n-1),\dotsc,p_\ell(n-1)\colon n\in\mathbb{P}\}$ and $\big\{p_1(n+1),\dotsc,p_\ell(n+1)\colon n\in\mathbb{P}\}$  are good for $\lim\inf$-$\ell$-recurrence, where $\mathbb{P}$ is the set of prime numbers, see also \cite{FHK13}.
We refer the reader to the survey \cite{F16} and references therein for more results.

Now we are ready to state our main result.

\begin{thm}\label{main-thm2}
Let $\Lambda=\{a_1,\dotsc,a_\ell\}$ be a collection of sequences of positive integers which satisfies the Condition $(**)$
and is good for $\liminf$-$\ell$-recurrence. If a topological dynamical system $(X,T)$ has positive topological entropy,
then there exist $\Delta$-weakly mixing subsets of $(X,T)$ along $\Lambda$.
\end{thm}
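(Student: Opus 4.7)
My plan is to combine the variational principle with disintegration over the Pinsker $\sigma$-algebra, apply Theorem~\ref{main-thm1} to a suitable relatively independent self-joining, and close with a Baire-category argument to produce the $\Delta$-weakly mixing sets inside supports of the fibre measures. I would first invoke the variational principle to obtain an ergodic $T$-invariant Borel probability measure $\mu$ on $X$ with positive entropy $h_\mu(T)>0$, and then disintegrate $\mu=\int_X \mu_x\,d\mu(x)$ with respect to the Pinsker $\sigma$-algebra $\mathcal{P}$. Positive entropy forces the fibre measures $\mu_x$ to be non-atomic for $\mu$-a.e.\ $x$, so $E_x:=\supp(\mu_x)$ is a non-trivial closed subset of $X$; the goal is to prove that $E_x$ is $\Delta$-weakly mixing along $\Lambda$ for $\mu$-a.e.\ $x$.

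For each $m\in\bbn$ I would form the $m$-fold relatively independent self-joining
\[
\lambda_m:=\int_X \mu_x^{\otimes m}\,d\mu(x)
\]
on $X^m$, which is $T^{(m)}$-invariant with support $\bigcup_x E_x^m$. Theorem~\ref{main-thm1} applied to $(X^m,T^{(m)},\lambda_m)$ (valid since Condition $(**)$ is a property of $\Lambda$ alone) shows that the Pinsker $\sigma$-algebra $\mathcal{P}_{\lambda_m}$ is a characteristic factor for the averages along $\Lambda$. A standard computation for relatively independent self-joinings over Pinsker identifies $\mathcal{P}_{\lambda_m}$ with the pullback $\pi_j^{-1}(\mathcal{P})$ via any coordinate projection, and for a rectangular open set $U=A_1\times\dotsb\times A_m$ yields $E_{\lambda_m}(\mathbf{1}_U\mid\mathcal{P}_{\lambda_m})=G_U(x_1)$ $\lambda_m$-a.e., where $G_U(x):=\prod_{j=1}^m E_\mu(\mathbf{1}_{A_j}\mid\mathcal{P})(x)$ is a $\mathcal{P}$-measurable function.

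Consequently, for rectangular open sets $U_0,U_1,\dotsc,U_\ell\subseteq X^m$ with $\lambda_m(U_i)>0$, Theorem~\ref{main-thm1} together with the above identification guarantees that the Cesàro average
\[
\frac{1}{N}\sum_{n=1}^N \lambda_m\!\left(U_0\cap T^{(m),-a_1(n)}U_1\cap\dotsb\cap T^{(m),-a_\ell(n)}U_\ell\right)
\]
has the same $\liminf$ as $\frac{1}{N}\sum_{n=1}^N\int G_{U_0}\prod_{i=1}^\ell T^{a_i(n)}G_{U_i}\,d\mu$. Each $G_{U_i}$ is a non-negative $\mathcal{P}$-measurable function with $\int G_{U_i}\,d\mu=\lambda_m(U_i)>0$; combined with the good-for-$\liminf$-$\ell$-recurrence of $\Lambda$ (applied, as needed, on an enlarged joining $\lambda_{m(\ell+1)}$ with the rectangular test set $U_0\times U_1\times\dotsb\times U_\ell\subseteq X^{m(\ell+1)}$ to handle distinct targets), this forces the $\liminf$ to be strictly positive. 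In particular the intersection has positive $\lambda_m$-measure for infinitely many $n$, and a Baire-category argument over a countable base of rectangular open sets meeting $\supp(\lambda_m)$ produces, for each $m$, a residual subset of $\supp(\lambda_m)$ whose $(T^{(m),a_1(n)},\dotsc,T^{(m),a_\ell(n)})_n$-orbit is dense in $\supp(\lambda_m)^\ell$. Disintegrating this residuality over $\mathcal{P}$ yields, for $\mu$-a.e.\ $x$ and every $m$, a residual subset of $E_x^m$ with dense $\Lambda$-orbit in $E_x^{m\ell}$, so $E_x$ is $\Delta$-weakly mixing along $\Lambda$ as required.

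The principal difficulty I anticipate is upgrading the one-set good-for-$\liminf$-$\ell$-recurrence hypothesis into the multi-set estimate above, because $\Delta$-transitivity demands distinct open targets $U_1,\dotsc,U_\ell$. The single-set case is immediate by passing to super-level sets of $G_{U_0}$, but the genuinely multi-set case seems to require the extra joining $\lambda_{m(\ell+1)}$ together with the characteristic-factor reduction to the zero-entropy Pinsker factor, and this interplay of Condition~$(**)$, the characteristic factor, and the recurrence hypothesis through the Pinsker machinery is the technical heart of the proof.
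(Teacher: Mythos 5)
Your setup (variational principle, Pinsker disintegration, Theorem~\ref{main-thm1} as a characteristic factor, relatively independent self-joinings) matches the paper, but the two steps that carry the actual content both fail. First, the mechanism you propose for distinct targets --- applying the $\liminf$-$\ell$-recurrence hypothesis to the enlarged joining $\lambda_{m(\ell+1)}$ with the rectangular test set $A=U_0\times U_1\times\dotsb\times U_\ell$ --- gives nothing: the recurrence hypothesis only concerns one set returning to \emph{itself}, and writing $S=T^{(m(\ell+1))}$ one has
\[
A\cap\bigcap_{i=1}^{\ell}S^{-a_i(n)}A=\prod_{j=0}^{\ell}\Bigl(U_j\cap\bigcap_{i=1}^{\ell}T^{(m),-a_i(n)}U_j\Bigr),
\]
so positivity of its measure only says each $U_j$ returns to itself along all of $a_1(n),\dotsc,a_\ell(n)$; the coordinates never mix, and you never obtain $\lambda_m\bigl(U_0\cap T^{(m),-a_1(n)}U_1\cap\dotsb\cap T^{(m),-a_\ell(n)}U_\ell\bigr)>0$ for \emph{distinct} $U_1,\dotsc,U_\ell$, which is exactly what $\Delta$-transitivity demands (Proposition~\ref{prop1}). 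You in fact have the roles reversed in your last paragraph: it is the multi-target case that is handled by super-level sets, and the joining buys nothing. The paper's Lemma~\ref{lem:key-lemma} applies the recurrence hypothesis in the \emph{base} system to the single Pinsker-measurable set $\Omega=\{z\colon\mu_z(U_j)>1/p \text{ for all } j\}$, producing a set $E\subseteq\bbn$ of \emph{positive lower density} on which $\mu\bigl(\Omega\cap\bigcap_i T^{-a_i(n)}\Omega\bigr)>c_0$; separately, the characteristic-factor property (Theorem~\ref{thm:Pinsker-simga-algebra}, Proposition~\ref{prop:Pinsker-density}) yields a \emph{density-one} set $F$ of times at which the fibre measures approximately factor, $\mu_z\bigl(U_{s(1)}\cap\bigcap_i T^{-a_i(n)}U_{s(i+1)}\bigr)\approx\mu_z(U_{s(1)})\prod_i\mu_{T^{a_i(n)}z}(U_{s(i+1)})$, and it is this product form that allows the targets to be distinct. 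Since $E\cap F\neq\emptyset$, one time $L$ works simultaneously for all $(\ell+1)$-tuples of targets. Note also that this needs the $U_j$ to have \emph{simultaneously} positive fibre mass over a common positive-measure set of $z$; that is not implied by $\lambda_m(U_j)>0$ for each $j$ separately (different rectangles meeting $\supp(\lambda_m)$ may live over essentially disjoint sets of fibres). In the paper, this simultaneity is exactly what positive entropy supplies at the start (Theorem~\ref{ergodic}: $\lambda(\Delta_X^2)=0$ gives disjoint $A_1,A_2$ with $\mu\{z\colon\mu_z(A_i)>c_1\}>0$) and what Lemma~\ref{lem:key-lemma} then propagates inductively.

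Your endgame is also broken. Residuality in $\supp(\lambda_m)$ does not ``disintegrate'' into residuality in the fibre products $E_x^m$: each $E_x^m$ is a closed, typically nowhere dense subset of $\supp(\lambda_m)$, so a residual subset of the support can miss entire fibres. Worse, $\Delta$-transitivity of $E_x$ asks that orbits $(T^{a_1(n)}y,\dotsc,T^{a_\ell(n)}y)$, $y\in E_x$, be dense in $E_x^\ell$; but $T^{a_i(n)}$ carries $E_x=\supp(\mu_x)$ onto $E_{T^{a_i(n)}x}$, a \emph{different} fibre, so one would additionally need the fibres themselves to recur close to $E_x$ --- nothing in your argument addresses this, and there is no reason the support of a conditional measure should be $\Delta$-weakly mixing. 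The paper sidesteps this entirely: its $\Delta$-weakly mixing set is not a fibre support but a Cantor set $A=\bigcap_{k}\bigcup_{\sigma\in\mathscr{E}_k}A_\sigma$ built transversally to the fibres by iterating Lemma~\ref{lem:key-lemma}, with $A_\sigma\subset A_{\sigma(1)}\cap\bigcap_{i=1}^{\ell}T^{-a_i(L_k)}A_{\sigma(i+1)}$, shrinking diameters, and every generation of pieces retaining simultaneously positive fibre mass on a positive-measure set of $z$; the recorded times $L_k$ then verify the hitting conditions of Proposition~\ref{prop1} directly.
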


In \cite{Piotr-Zhang(3)}, Oprocha and Zhang gave a survey of recent results  on local aspects of dynamics of pairs, tuples and sets, especially on weakly mixing pairs, tuples and sets.
As pointed out by the referee, it is interesting to study corresponding
results on $\Delta$-weakly mixing sets, which is left for further study.

This paper is organized as follows. In Section 2, we review some necessary notions and required properties. In Section 3, we study some properties of $\Delta$-weakly mixing sets along a collection of sequences.
 Theorems \ref{main-thm1} is proved in Sections 4.
Section 5 is devoted to proving the main result Theorem \ref{main-thm2}.

\medskip

\noindent \textbf{Acknowledgments.}
The authors were supported in part by the
NNSF of China (1771264, 11871188, 11801538)
and NSF of Guangdong Province (2018B030306024).
The authors would like to thank Prof. Wen Huang and Prof. Song Shao for their useful comments and suggestions.
The authors would also like to thank the anonymous referee for the careful reading and helpful suggestions.

\section{Preliminaries}
In this section we will review some notions and properties that will be used later, such as density of sets of positive integers, condition expectation and disintegration of measures over sub-$\sigma$-algebras.

\subsection{Density of subsets of positive integers}
Let $F$ be a subset of $\bbn$, the \emph{upper density} and \emph{lower density} of $F$ is defined respectively by
$$\overline{D}(F)=\limsup\limits_{n\to\infty}\frac{\#(F\cap\{1,2,\dotsc,n\})}{n}$$
and
$$\underline{D}(F)=\liminf\limits_{n\to\infty}\frac{\#(F\cap\{1,2,\dotsc,n\})}{n},$$
where $\#(\cdot)$ is the number of elements of a finite set.
We say that $F$ has \emph{density} $D(F)$ if $\overline{D}(F)=\underline{D}(F)$, where $D(F)$ denotes  this common value. It is clear that for two subsets $F$ and $E$ of $\bbn$ with $D(F)=1$ and $\underline{D}(E)>0$, one has $F\cap E\neq\emptyset$.

\subsection{Condition expectation and disintegration of measures}

Let $(X,T)$ be a topological dynamical system.
We denote the collection of all Borel probability measures of $X$ by $\mathcal{M}(X)$, the collection of all $T$-invariant Borel probability measures of $X$ by $\mathcal{M}(X,T)$, and the collection of all ergodic measures of $(X,T)$ by $\mathcal{M}^e(X,T)$.
We now recall the main results and properties of condition expectation and disintegration of measures. We refer to
\cite[Chapter 5]{Einsiedler-Ward} for more details.

Let $(X,\mathscr{B},\mu)$ be a probability space,  and $\mathscr{A}\subseteq\mathscr{B}$ a sub-$\sigma$-algebra. Then there is a map
$$E(\cdot|\mathscr{A}):L^1(X,\mathscr{B},\mu)\to L^1(X,\mathscr{A},\mu)$$
called the \emph{conditional expectation}, that satisfies the following properties.
\begin{enumerate}
\item For $f\in L^1(X,\mathscr{B},\mu)$, the image function $E(f|\mathscr{A})$ is characterized almost everywhere by the two properties:
\begin{itemize}
	\item $E(f|\mathscr{A})$ is $\mathscr{A}$-measurable;
	\item for any $A\in\mathscr{A}$, $\int_A E(f|\mathscr{A})d\mu=\int_A fd\mu$.
\end{itemize}
\item $E(\cdot|\mathscr{A})$ is a linear operator of norm 1. Moreover, $E(\cdot|\mathscr{A})$ is positive.
\item For $f\in L^1(X,\mathscr{B},\mu)$ and $g\in L^{\infty}(X,\mathscr{A},\mu)$,
    $$E(g\cdot f|\mathscr{A})=g\cdot E(f|\mathscr{A})$$
   $\mu$-almost everywhere.
\item $\mathscr{A}'\subseteq\mathscr{A}$ is a sub-$\sigma$-algebra, then
    $$E\big(E(f|\mathscr{A})\big|\mathscr{A}'\big)=E(f|\mathscr{A}')$$
    $\mu$-almost everywhere.
\end{enumerate}

The conditional expectation $E(\cdot|\mathscr{A})$ may be thought of as the natural projection map from $L^1(X,\mathscr{B},\mu)$ to its closed subspace $L^1(X,\mathscr{A},\mu)$. The following result is well-known (see e.g. \cite[Theorem 14.26]{Glasner}, \cite[Section 5.2]{Einsiedler-Ward}).
\begin{thm}[Martingale Theorem]\label{Martingale thm} Let $(\mathscr{A}_n)_{n\geq 1}$ be a decreasing sequence (resp. an increasing sequence) of sub-$\sigma$-algebras of $\mathscr{B}$ and let $\mathscr{A}=\bigcap\limits_{n\geq1}\mathscr{A}_n$ (resp. $\mathscr{A}=\bigvee_{n\geq 1}\mathscr{A}_n$). Then for every $f\in L^2(X,\mathscr{B},\mu)$, one has
$$E(f|\mathscr{A}_n)\to E(f|\mathscr{A})$$
in $L^2(\mu)$ and also $\mu$-almost everywhere.
\end{thm}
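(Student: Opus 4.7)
The plan is to split the argument into two parts: (i) convergence in $L^2(\mu)$, and (ii) the almost-everywhere convergence, using (i) as the starting point.

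For (i), I would work purely in Hilbert space. Since $E(\cdot|\mathscr{A}_n)$ is the orthogonal projection from $H := L^2(X,\mathscr{B},\mu)$ onto the closed subspace $H_n := L^2(X,\mathscr{A}_n,\mu)$, it suffices to understand strong convergence of the projections $P_{H_n}$. In the increasing case $(H_n)$ is an ascending chain, and a monotone class / $\pi$-$\lambda$ argument shows that $\overline{\bigcup_n H_n} = L^2(X,\bigvee_n \mathscr{A}_n,\mu)$; then a standard Hilbert-space fact (projections onto an ascending sequence of closed subspaces converge strongly to the projection onto the closure of the union) yields the result. The decreasing case is symmetric: $(H_n)$ descends with $\bigcap_n H_n = L^2(X,\bigcap_n \mathscr{A}_n,\mu)$, and projections onto a descending sequence converge strongly to the projection onto the intersection. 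After reducing to the case $E(f|\mathscr{A})=0$ by linearity, the key computation is that $\|E(f|\mathscr{A}_n)\|_2$ is monotone and bounded, so $E(f|\mathscr{A}_n)$ is Cauchy in $L^2$ with limit in the appropriate subspace, which must coincide with $E(f|\mathscr{A})$ by the characterizing properties of conditional expectation.

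For (ii), I would use a maximal inequality. For the increasing case, $f_n := E(f|\mathscr{A}_n)$ is a martingale, and Doob's $L^2$ maximal inequality provides $\|\sup_n |f_n|\|_2 \le 2\|f\|_2$. Combined with the $L^2$ convergence from (i), a standard $\varepsilon/3$-approximation gives a.e.\ convergence: given $\varepsilon>0$, pick $g \in \bigcup_k H_k$ with $\|f-g\|_2 < \varepsilon$ (which exists by density); for such a $g$, $E(g|\mathscr{A}_n)$ is eventually equal to $E(g|\mathscr{A})$, so convergence is trivial. Then
\[
\limsup_{n\to\infty}|f_n - E(f|\mathscr{A})| \le \sup_n |E(f-g|\mathscr{A}_n)| + |E(f-g|\mathscr{A})| \quad \mu\text{-a.e.,}
\]
and Chebyshev plus the maximal inequality show the right-hand side is small in measure; letting $g$ approximate $f$ along a subsequence and invoking Borel--Cantelli upgrades this to a.e.\ convergence.

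The main obstacle is the decreasing case, i.e.\ the \emph{reverse} martingale. Doob's inequality as usually stated applies to forward martingales, so I would derive the reverse analogue by applying the forward inequality on finite windows $\mathscr{A}_N, \mathscr{A}_{N-1}, \dotsc, \mathscr{A}_1$ (which is a martingale in reversed index) and then letting $N\to\infty$. Once the reverse maximal inequality $\|\sup_n |E(f|\mathscr{A}_n)|\|_2 \le C\|f\|_2$ is in hand, the approximation scheme proceeds exactly as in the increasing case, this time approximating $f$ in $L^2$ by bounded functions for which the deterministic part of the convergence can be controlled. The cleanest alternative I would consider, to avoid re-developing martingale theory, is to cite the reverse martingale maximal inequality directly (e.g.\ from Neveu or Doob) and only verify that the identification $\lim_n E(f|\mathscr{A}_n) = E(f|\bigcap_n\mathscr{A}_n)$ of the a.e.\ limit follows from (i).
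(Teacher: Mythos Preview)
The paper does not prove this theorem at all: it is stated as a well-known result with references to \cite[Theorem~14.26]{Glasner} and \cite[Section~5.2]{Einsiedler-Ward}, and no argument is given. So there is nothing to compare against; your proposal supplies a proof where the paper simply quotes the literature.

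Your outline is the standard textbook route and is essentially what one finds in the references the paper cites: $L^2$-convergence via monotone sequences of orthogonal projections, then a.e.\ convergence via Doob's maximal inequality and a density argument. The increasing case is handled cleanly. One point to tighten in the decreasing case: the phrase ``approximating $f$ in $L^2$ by bounded functions for which the deterministic part of the convergence can be controlled'' does not work the same way as in the increasing case, since there is no obvious dense class on which $E(\cdot|\mathscr{A}_n)$ stabilises. The clean route you mention at the end is the right one: once the reverse maximal inequality is in hand (obtained, as you say, by reversing finite blocks), the Banach principle reduces a.e.\ convergence to a dense subclass, and here one can take $f$ of the form $g + h$ with $g \in L^2(\mathscr{A})$ and $h \perp L^2(\mathscr{A}_1)$, for which $E(f|\mathscr{A}_n) = g$ for all $n$; alternatively, just invoke the reverse martingale convergence theorem directly and identify the limit via part~(i). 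Either way the argument goes through.
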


Let $(X,\mathscr{B},\mu)$ be a Borel probability space, and $\mathscr{A}\subseteq\mathscr{B}$ a $\sigma$-algebra. Then $\mu$ can \emph{be disintegrated over $\mathscr{A}$} as
\[
\mu=\int_X \mu_x^{\mathscr{A}} d\mu(x)
\]
in the sense that for any $f\in L^1(X,\mathscr{B},\mu)$, one has
\begin{equation}
E(f|\mathscr{A})(x)=\int f(y)d\mu_x^{\mathscr{A}}(y)\quad
\text{for }\mu\text{-a.e.\ }x\in X,\label{eq:Ef-mux}
\end{equation}
where $\mu_x^{\mathscr{A}}\in \mathcal{M}(X)$.
If $\mathscr{A}$ is countably-generated, then $\mu_x^{\mathscr{A}}([x]_{\mathscr{A}})=1$
for all $\mu$-almost every $x\in X$, where
$$[x]_{\mathscr{A}}=\bigcap_{x\in A,A\in\mathscr{A}}A$$
is the atom of $\mathscr{A}$ containing $x$.
Moreover $\mu_x^{\mathscr{A}}=\mu_y^{\mathscr{A}}$ for $\mu$-almost every $x,y\in X$ whenever $[x]_{\mathscr{A}}=[y]_{\mathscr{A}}$.

Let $\mu\in\mathcal{M}(X)$ and $\mu=\int_X \mu_x^{\mathscr{A}}d\mu(x)$ be the disintegration of $\mu$ over $\mathscr{A}$.
The \emph{relatively independent self-joining of $\mu$ over $\mathscr{A}$} is the probability measure
\[
\mu\times_{\mathscr{A}}\mu
=\int_X\mu_x^{\mathscr{A}}\times\mu_x^{\mathscr{A}}d\mu(x)
\]
 on $X\times X$ in the sense that
$$\mu\times_{\mathscr{A}}\mu(A\times B)=\int_X\mu_x^{\mathscr{A}}(A)\mu_x^{\mathscr{A}}(B)d\mu(x)$$
for all $A,B\in\mathscr{B}$. Denote $\lambda=\mu\times_{\mathscr{A}}\mu$.
Let $\pi:X\times X\to X$ be the canonical projection to the first coordinate, and
$$\lambda=\int_{X\times X}\lambda_{(x,y)}^{\pi^{-1}(\mathscr{A})}d\lambda\big((x,y)\big)$$
be the disintegration of $\lambda$ over the $\sigma$-algebra $\pi^{-1}(\mathscr{A})$ of $\mathscr{B}\times\mathscr{B}$. By \cite[Proposition 6.16]{Einsiedler-Ward}, for $\lambda$-a.e. $(x,y)$, $\lambda_{(x,y)}^{\pi^{-1}(\mathscr{A})}=\mu_x^{\mathscr{A}}\times\mu_y^{\mathscr{A}}$. Thus for $f_1,f_2\in L^2(X,\mathscr{B},\mu)$ one has
\begin{align}\label{e8}
E(f_1\otimes f_2|\pi^{-1}(\mathscr{A}))(x,y)&=\int f_1\otimes f_2(z_1,z_2)d\lambda_{(x,y)}^{\pi^{-1}(\mathscr{A})}(z_1,z_2)\nonumber\\
&=\int f_1(z_1)\cdot f_2(z_2)d\mu_x^{\mathscr{A}}\times\mu_y^{\mathscr{A}}(z_1,z_2)\nonumber\\
&=\int f_1(z_1)d\mu_x^{\mathscr{A}}(z_1)\cdot\int f_2(z_2)d\mu_y^{\mathscr{A}}(z_2)\nonumber\\
&=E(f_1|\mathscr{A})(x)\cdot E(f_2|\mathscr{A})(y),
\end{align}
where $f_1\otimes f_2$ is the function on $X\times X$
defined by
$f_1\otimes f_2(x_1,x_2)=f_1(x_1)f_2(x_2)$ for $(x_1,x_2)\in X\times X$.

\section{$\Delta$-weakly mixing subsets along a collection of sequences}
In this section, we study properties of $\Delta$-transitive subsets
and $\Delta$-weakly mixing subsets along a collection of sequence.
As those sets can be regarded as subsequence version of
$\Delta$-transitive subsets and
$\Delta$-weakly mixing subsets.
The idea of the proofs are the same, we only state the results and outline the key ingredients of the proofs. We will leave details of the proof to the interested reader.

Let $\Lambda=\{a_1,a_2,\dotsc, a_{\ell}\}$ be a collection of sequences and $(X,T)$ a topological dynamical system.
Recall that a closed subset $E$ of $X$ is called \emph{$\Delta$-transitive along the collection $\Lambda$ of sequences}
if there exists a residual subset $A$ of $E$ such that for every $x\in A$ one has
\[ E^\ell  \subseteq \overline{\{ (T^{a_1(n)}x,\dotsc,T^{a_\ell(n)}x)\colon n\in\bbn\}}.\]
Following the idea in \cite{HLYZ17}, we define the hitting time set
of subsets  along the collection $\Lambda$ of sequences.
For subsets $V$, $U_1,U_2,\dotsc,U_\ell$  of $X$, define
\[
N_{\Lambda}(V;U_1,U_2,\dotsc,U_{\ell})=
\biggl\{k\in\bbn: V\cap\bigcap_{i=1}^{\ell}T^{-a_i(k)}U_i\neq\emptyset\biggr\}.
\]

We have the following characterization of $\Delta$-transitive sets along a collection of sequences.
The proof is similar to the one of \cite[Proposition 3.3]{HLYZ17}.

\begin{prop}\label{prop:Delta-transitive-Lambda}
Let $\Lambda=\{a_1,a_2,\dotsc, a_{\ell}\}$ be a collection of sequences and $(X,T)$ a topological dynamical system.
Then a closed subset $E$ of $ X$ is $\Delta$-transitive
along $\Lambda$ if and only if
for every non-empty open subsets $V$, $U_1,U_2,\dotsc,U_\ell$  of $X$
intersecting $E$,
\[
N_{\Lambda}(V\cap E;U_1,U_2,\dotsc,U_{\ell})\neq\emptyset.
\]
\end{prop}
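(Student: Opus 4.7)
The plan is to adapt the standard Baire category argument used to characterize transitivity via hitting times, carried out relative to the subspace topology on $E$. Since $E$ is a closed subset of the compact metric space $X$, it is itself a compact metric space and hence a Baire space; this is the key input that lets us pass from ``some point has a dense orbit'' to ``residually many points do''.

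For the easy direction (necessity), assume that $E$ is $\Delta$-transitive along $\Lambda$ with witnessing residual set $A\subseteq E$. Given non-empty open sets $V,U_1,\dotsc,U_\ell$ of $X$, each intersecting $E$, the set $V\cap E$ is non-empty and (relatively) open in $E$, so by residuality we may pick $x\in A\cap V\cap E$. Choosing $y_i\in U_i\cap E$, we get $(y_1,\dotsc,y_\ell)\in E^\ell$; density of the $\Lambda$-orbit of $x$ in $E^\ell$ then yields $n\in\bbn$ with $T^{a_i(n)}x\in U_i$ for every $i$, which is exactly $n\in N_\Lambda(V\cap E;U_1,\dotsc,U_\ell)$.

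For sufficiency, I would fix a countable base $\{W_k\}_{k\in\bbn}$ for the topology of $X$ consisting of sets that meet $E$, and for each $\ell$-tuple $\mathbf{k}=(k_1,\dotsc,k_\ell)$ with $W_{k_i}\cap E\neq\emptyset$ define
\[
G_{\mathbf{k}}=\bigcup_{n\in\bbn}\Bigl(E\cap\bigcap_{i=1}^{\ell}T^{-a_i(n)}W_{k_i}\Bigr).
\]
Each $G_{\mathbf{k}}$ is open in $E$, and I would verify it is dense in $E$ as follows: any non-empty relatively open $V\cap E\subseteq E$ gives, by the assumed hitting-time condition, some $n$ and some $x\in V\cap E$ with $T^{a_i(n)}x\in W_{k_i}$, hence $x\in G_{\mathbf{k}}\cap V\cap E$. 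Since the collection of such $\mathbf{k}$ is countable, the Baire category theorem applied to $E$ produces a residual set $A=\bigcap_{\mathbf{k}}G_{\mathbf{k}}$ in $E$; any $x\in A$ has the property that for every product basic open set $W_{k_1}\times\dotsb\times W_{k_\ell}$ meeting $E^\ell$ there is $n$ with $(T^{a_1(n)}x,\dotsc,T^{a_\ell(n)}x)\in W_{k_1}\times\dotsb\times W_{k_\ell}$, which gives $E^\ell\subseteq\overline{\{(T^{a_1(n)}x,\dotsc,T^{a_\ell(n)}x):n\in\bbn\}}$.

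The only mild subtlety is bookkeeping with the subspace topology: one must confirm that every relatively open subset of $E^\ell$ contains a product of basic open sets $W_{k_1}\times\dotsb\times W_{k_\ell}$ each of which meets $E$, so that the countable family $\{G_{\mathbf{k}}\}$ really detects density in $E^\ell$. This is immediate from the definition of the product topology on $X^\ell$ and the fact that a non-empty relatively open subset of $E^\ell$ comes from an open subset of $X^\ell$ meeting $E^\ell$. Beyond this, the argument is essentially the one used in \cite[Proposition 3.3]{HLYZ17}, so no further obstacle arises.
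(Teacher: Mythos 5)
Your proof is correct and follows essentially the same route as the paper, which omits the argument and simply refers to \cite[Proposition 3.3]{HLYZ17}: the standard Baire-category scheme, with dense open subsets $G_{\mathbf{k}}$ of $E$ indexed by tuples of basic open sets meeting $E$, using that the closed set $E$ is a compact metric (hence Baire) space. The only phrase to adjust is calling $\{W_k\}$ a ``countable base for the topology of $X$ consisting of sets that meet $E$''; what you actually use (and all that is needed) is the countable subfamily of a base of $X$ formed by those members whose intersection with $E$ is non-empty, whose traces give a base of $E$.
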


Recall that $E$ is called \emph{$\Delta$-weakly mixing along the collection $\Lambda$ of sequences} if $E^m$ is a $\Delta$-transitive subset of $(X^m,T^{(m)})$ along $\Lambda$ for every $m\in\bbn$.
By Proposition~\ref{prop:Delta-transitive-Lambda},
we have the following characterization of $\Delta$-weakly mixing sets along a collection of sequences.

\begin{prop}\label{prop1}
Let $\Lambda=\{a_1,a_2,\dotsc,a_{\ell}\}$ be a collection of sequences and $(X,T)$ a topological dynamical system.
Then a closed subset $E$ of $ X$ is $\Delta$-weakly mixing
along $\Lambda$ if and only if for every $m\in\bbn$ and non-empty open subsets $V_j$, $U_{i,j}$, $i\in\{1,2,\dotsc,\ell\}$, $j\in\{1,2,\dotsc,m\}$,  of $X$ intersecting $E$, one has
\[\bigcap_{j=1}^{m}N_{\Lambda}(V_j\cap E;U_{1,j},U_{2,j}\dotsc,U_{\ell,j})\neq\emptyset.
\]
\end{prop}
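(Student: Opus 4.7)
The plan is to deduce Proposition~\ref{prop1} from Proposition~\ref{prop:Delta-transitive-Lambda} applied to the product system $(X^m, T^{(m)})$ together with the closed subset $E^m\subseteq X^m$. The key algebraic observation is that for any product open set $U_{i,1}\times\dots\times U_{i,m}$ in $X^m$ one has
\[
(T^{(m)})^{-a_i(k)}(U_{i,1}\times\dots\times U_{i,m})=T^{-a_i(k)}U_{i,1}\times\dots\times T^{-a_i(k)}U_{i,m},
\]
so the intersection $(\mathcal{V}\cap E^m)\cap\bigcap_{i=1}^{\ell}(T^{(m)})^{-a_i(k)}\mathcal{U}_i$, for product sets $\mathcal{V}=V_1\times\dots\times V_m$ and $\mathcal{U}_i=U_{i,1}\times\dots\times U_{i,m}$, factors as a product of sets of the form $V_j\cap E\cap\bigcap_{i=1}^{\ell}T^{-a_i(k)}U_{i,j}$. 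This product is non-empty iff each factor is non-empty, which converts a single $N_\Lambda$-non-emptiness in $X^m$ into the intersection of $m$ such statements in $X$.

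For the ``only if'' direction, fix $m\in\bbn$ and non-empty open sets $V_j, U_{i,j}\subseteq X$ intersecting $E$. Form $\mathcal{V}=V_1\times\dots\times V_m$ and $\mathcal{U}_i=U_{i,1}\times\dots\times U_{i,m}$; these are non-empty open subsets of $X^m$ intersecting $E^m$. Since $E^m$ is $\Delta$-transitive along $\Lambda$ in $(X^m, T^{(m)})$ by hypothesis, Proposition~\ref{prop:Delta-transitive-Lambda} yields some $k\in N_{\Lambda}(\mathcal{V}\cap E^m;\mathcal{U}_1,\dots,\mathcal{U}_\ell)$. The product identity above then shows that the same $k$ belongs to $N_{\Lambda}(V_j\cap E;U_{1,j},\dots,U_{\ell,j})$ for every $j\in\{1,\dots,m\}$, giving the desired non-empty intersection.

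For the ``if'' direction, fix $m\in\bbn$ and arbitrary non-empty open subsets $\mathcal{V},\mathcal{U}_1,\dots,\mathcal{U}_\ell$ of $X^m$ which intersect $E^m$. Choose a point $(y_1,\dots,y_m)\in\mathcal{V}\cap E^m$ and, using a basis for the product topology, pick open subsets $V_j\subseteq X$ with $y_j\in V_j$ and $V_1\times\dots\times V_m\subseteq\mathcal{V}$; in particular each $V_j$ meets $E$ since $y_j\in E\cap V_j$. Perform the same refinement for each $\mathcal{U}_i$, obtaining non-empty open sets $U_{i,j}\subseteq X$ meeting $E$ with $U_{i,1}\times\dots\times U_{i,m}\subseteq\mathcal{U}_i$. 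Apply the hypothesis to the family $\{V_j, U_{i,j}\}$ to obtain $k$ in the intersection $\bigcap_j N_{\Lambda}(V_j\cap E; U_{1,j},\dots,U_{\ell,j})$, and run the product identity in reverse to conclude $k\in N_{\Lambda}(\mathcal{V}\cap E^m;\mathcal{U}_1,\dots,\mathcal{U}_\ell)$; Proposition~\ref{prop:Delta-transitive-Lambda} then gives that $E^m$ is $\Delta$-transitive along $\Lambda$ in $(X^m,T^{(m)})$, as required.

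There is no real obstacle here; the argument is essentially a bookkeeping exercise reducing the product-system characterization to its coordinate-by-coordinate version. The one point that deserves a little care is the refinement step in the sufficiency direction: one must choose the basic product neighborhoods with each factor $V_j$ (respectively $U_{i,j}$) meeting $E$, which is precisely what is achieved by starting from a point lying in $E^m\cap\mathcal{V}$ (respectively $E^m\cap\mathcal{U}_i$) and exploiting that $E^m$ has product structure rather than just that $X^m$ does.
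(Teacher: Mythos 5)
Your proposal is correct and follows exactly the route the paper intends: the paper derives Proposition~\ref{prop1} by applying Proposition~\ref{prop:Delta-transitive-Lambda} to the product system $(X^m,T^{(m)})$ with the closed set $E^m$, and your argument supplies precisely the omitted bookkeeping (reducing arbitrary open sets in $X^m$ intersecting $E^m$ to basic product open sets whose factors meet $E$, and factoring the hitting-time condition coordinatewise). Nothing further is needed.
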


It is not hard to see that if a $\Delta$-weakly mixing set along a collection of sequences with $\ell\geq 2$ has at least two points then it  must be perfect.

Let $E$ be a closed subset of $X$.
For $\varepsilon>0$,
we say that a subset $A$ of $X$ is \emph{$(\Lambda,\varepsilon)$-spread in $E$}
if there exist $\delta\in(0,\varepsilon)$, $n\in\bbn$ and distinct points $z_1,z_2,\dotsc,z_n\in X$
such that $A\subset \bigcup_{i=1}^n B(z_i,\delta)$ and
for any maps $g_j\colon\{z_1,z_2,\dotsc,z_n\}\to E$ where $j=1,2,\dotsc,\ell$,
there exists $k\in\bbn$ such that $\frac{1}{k}<\varepsilon$
and
$T^{a_j(k)}(B(z_i,\delta))\subset B(g_j(z_i),\varepsilon)$ for $i=1,2,\dotsc,n$ and $j=1,2,\dotsc,\ell$.
Denote by $\mathscr{X}(\Lambda,\varepsilon,E)$ the collection of all closed sets that are $(\Lambda,\varepsilon)$-spread in $E$ and put
\[
\mathscr{X}(\Lambda,E)=\bigcap_{k=1}^\infty \mathscr{X}(\Lambda,\tfrac{1}{k},E).
\]
Similar to \cite[Lemma 3.8.]{HLYZ17}, we can show the following result.
\begin{lem}\label{lem:spread-sets}
If $E$ is a $\Delta$-weakly mixing set along the collection $\Lambda$ of sequence,
then $\mathscr{X}(\Lambda,E)\cap 2^E$ is residual in $2^E$,
where $2^E$ is the hyperspace of $E$ with the Hausdorff metric.
\end{lem}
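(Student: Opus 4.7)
The plan is to apply the Baire category theorem to the compact metric space $2^E$ equipped with the Hausdorff metric. Since
\[
\mathscr{X}(\Lambda,E)\cap 2^E=\bigcap_{k=1}^\infty\mathscr{X}(\Lambda,\tfrac{1}{k},E)\cap 2^E,
\]
it suffices to show that for each $k\in\bbn$ the set $\mathscr{X}(\Lambda,\tfrac{1}{k},E)\cap 2^E$ is open and dense in $2^E$. Openness is almost immediate from the definition: the spreading condition involves only the centers $z_1,\dots,z_n$, the radius $\delta$ and target points in $E$, while the set $A$ itself enters only through the covering requirement $A\subset\bigcup_i B(z_i,\delta)$. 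Hence, if $A\in\mathscr{X}(\Lambda,\tfrac{1}{k},E)\cap 2^E$ with witness $(\delta,z_1,\dots,z_n)$, then every $A'\in 2^E$ with $A'\subset\bigcup_i B(z_i,\delta)$ inherits the same witness, and the collection of such $A'$ is a basic open neighborhood of $A$ in the Vietoris topology on $2^E$.

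For density, fix $A\in 2^E$ and $\varepsilon'>0$, and construct $A''\in\mathscr{X}(\Lambda,\tfrac{1}{k},E)\cap 2^E$ with $d_H(A,A'')<\varepsilon'$. Choose a finite $\tfrac{1}{8k}$-dense subset $\{w_1,\dots,w_M\}$ of $E$ and a finite $\varepsilon'/4$-net $\{\tilde z_1,\dots,\tilde z_n\}$ of $A$ in $E$. By hypothesis $E^n$ is a $\Delta$-transitive subset of $(X^n,T^{(n)})$ along $\Lambda$, so the $\Delta$-transitive points form a residual subset of $E^n$; using that $E$ is perfect under the $\Delta$-weakly mixing hypothesis, perturb each $\tilde z_i$ by at most $\varepsilon'/4$ to distinct $z_i\in E$ so that $(z_1,\dots,z_n)$ is a $\Delta$-transitive point in $E^n$, and put $A''=\{z_1,\dots,z_n\}$, so that $d_H(A,A'')<\varepsilon'/2$. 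For each of the finitely many discretized target maps $\phi\colon\{1,\dots,n\}\times\{1,\dots,\ell\}\to\{w_1,\dots,w_M\}$, $\Delta$-transitivity of the tuple $(z_1,\dots,z_n)$ produces some $m_\phi>k$ with $d(T^{a_j(m_\phi)}z_i,w_{\phi(j,i)})<\tfrac{1}{8k}$ for all $i,j$ (the hitting set is infinite since $E$ is perfect, so we may choose $m_\phi>k$). Uniform continuity of the finitely many maps $T^{a_j(m_\phi)}$ on the compact space $X$ yields a common $\eta>0$ with $\operatorname{diam}T^{a_j(m_\phi)}B(y,\eta)<\tfrac{1}{8k}$ for every $y\in X$ and every $\phi,j$. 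Setting $\delta=\min(\eta,\tfrac{1}{2k})<\tfrac{1}{k}$, and selecting, for an arbitrary target $g_j\colon\{z_1,\dots,z_n\}\to E$, an index $\phi(j,i)$ with $w_{\phi(j,i)}\in B(g_j(z_i),\tfrac{1}{8k})$, one obtains
\[
T^{a_j(m_\phi)}B(z_i,\delta)\subset B(T^{a_j(m_\phi)}z_i,\tfrac{1}{8k})\subset B(g_j(z_i),\tfrac{3}{8k})\subset B(g_j(z_i),\tfrac{1}{k}),
\]
so that $A''$ is $(\Lambda,\tfrac{1}{k})$-spread in $E$.

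The main obstacle is the need for a single radius $\delta$, fixed before the target map $g_j$ is chosen, that works uniformly for every possible target in $E$, since in general there are infinitely many such targets; the resolution is precisely the discretization of the range of $g_j$ via the $\tfrac{1}{8k}$-net $\{w_r\}$ in $E$, which reduces the problem to only finitely many hitting times $m_\phi$ so that a common uniform-continuity modulus can be extracted. With openness and density in hand, the Baire category theorem on the compact Baire space $2^E$ yields the residuality of $\mathscr{X}(\Lambda,E)\cap 2^E$, completing the proof.
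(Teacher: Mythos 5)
Your proof is correct and takes essentially the route the paper intends: the paper gives no argument of its own here, deferring to \cite[Lemma 3.8]{HLYZ17}, and that proof is precisely your Baire-category scheme --- each $\mathscr{X}(\Lambda,\tfrac{1}{k},E)\cap 2^E$ is open because the witness $(\delta,z_1,\dotsc,z_n)$ is inherited by every closed subset of $\bigcup_{i}B(z_i,\delta)$, and dense by perturbing a finite net of $A$ to a $\Delta$-transitive tuple of $E^n$ with distinct coordinates (using that $E$ is perfect) and discretizing the target maps through a finite $\tfrac{1}{8k}$-net of $E$ so that uniform continuity of the finitely many resulting iterates yields a single $\delta$. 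Your identification of the uniform-$\delta$ issue and its resolution by discretization is exactly the key point of the cited argument, so there is nothing to flag.
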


Now combing Lemma \ref{lem:spread-sets} and the method
in the proof of \cite[Theorem A]{HLYZ17}, we have the following chaotic behavior of $\Delta$-weakly mixing sets along a collection of sequences.

\begin{thm}\label{thm-Delta-weak-mixing}
	Let $(X,T)$ be a topological dynamical system and $\Lambda=\{a_1,\dotsc,a_\ell\}$ be a collection of sequences of integers. Then a closed subset $E$ of $X$ with at least two points is $\Delta$-weakly mixing along $\Lambda$ if and only if there exists an increasing sequence of Cantor sets $C_1\subset C_2\subset \dotsb $ of $E$ such that $C:=\bigcup_{k=1}^\infty C_k$ is dense in $E$ and
	\begin{enumerate}
		\item for any subset $A$ of $C$ and any continuous functions
		$g_j\colon A\to E$ for $j=1,\dotsc,\ell$, there exists
		an increasing sequence $\{p_n\}$ of positive integers
		such that
		\[\lim_{n\to\infty } T^{a_j(p_n)}x=g_j(x),\]
		for every $x\in A$ and $j=1,\dotsc,\ell$;
		\item for any $k\in\bbn$, any closed subset $B$ of $C_k$,
		and continuous function $h_j\colon B\to E$ for $j=1,\dotsc,\ell$,
		there exists an increasing sequence $\{q_n\}$ of positive integers
		such that
		\[\lim_{n\to\infty } T^{a_j(q_n)}x=h_j(x),\]
		uniformly on $x\in B$ and $j=1,\dotsc,\ell$.
	\end{enumerate}
\end{thm}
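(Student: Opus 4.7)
The sufficiency direction $(\Leftarrow)$ is a short deduction from Proposition~\ref{prop1}. Given $m \in \bbn$ and non-empty open sets $V_j, U_{i,j}$ of $X$ all intersecting $E$, density of $C = \bigcup_k C_k$ in $E$ lets me pick distinct points $x_1, \dots, x_m \in C$ with $x_j \in V_j$, and pick $y_{i,j} \in U_{i,j} \cap E$. Setting $A = \{x_1, \dots, x_m\}$ and $g_i(x_j) = y_{i,j}$, property (1) applied with these (continuous, as $A$ is finite) data supplies an increasing sequence $p_n$ with $T^{a_i(p_n)} x_j \to y_{i,j}$ for all $i,j$, which forces $p_n \in \bigcap_{j=1}^m N_\Lambda(V_j \cap E; U_{1,j}, \dots, U_{\ell,j})$ for all large $n$.

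The necessity direction $(\Rightarrow)$ is the substantive part, and I plan to follow the Mycielski-style template from the proof of \cite[Theorem~A]{HLYZ17}, taking Lemma~\ref{lem:spread-sets} as the only dynamical input. Since $E$ has at least two points and is $\Delta$-weakly mixing along $\Lambda$, it is perfect, so $2^E$ is a Polish space in which the Cantor subsets of $E$ form a dense $G_\delta$; intersecting with the residual family $\mathscr{X}(\Lambda, E) \cap 2^E$ of Lemma~\ref{lem:spread-sets} yields a residual family of ``spread Cantor sets''. Fixing a countable basis $\{W_m\}$ of $E$, I will construct the $C_k$ inductively so that (i) $C_k$ meets $W_1, \dots, W_k$ (ensuring density of $\bigcup_k C_k$ in $E$), and (ii) $C_k$ is contained in a single element $F_k \in \mathscr{X}(\Lambda, 1/k, E) \cap 2^E$, so that $C_k$ inherits the $(\Lambda, 1/k)$-spread data $(z_1, \dots, z_n, \delta)$ of $F_k$.

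Granted (i) and (ii), properties (1) and (2) follow by essentially the same mechanism. For (2), fix a closed $B \subset C_k$ and continuous $h_j \colon B \to E$. For each $n \ge k$, use the $(\Lambda, 1/n)$-spread data of $F_n \supset C_n \supset B$ to cover $B$ by balls $B(z_i, \delta)$ with $\delta < 1/n$; for each $i$ with $B \cap B(z_i, \delta) \neq \emptyset$ pick $b_i \in B \cap B(z_i, \delta)$ and set $\tilde h_j(z_i) = h_j(b_i)$ (taking any $E$-value otherwise). The spread property of $F_n$ yields $q_n$ with $1/q_n < 1/n$ and $T^{a_j(q_n)} B(z_i, \delta) \subset B(\tilde h_j(z_i), 1/n)$, and uniform continuity of the $h_j$ on the compact set $B$ converts this into $\sup_{b \in B} d(T^{a_j(q_n)} b, h_j(b)) \to 0$ as $n \to \infty$, giving the desired uniform convergence. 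Property (1) is obtained by the same recipe with $A$ in place of $B$ and pointwise continuity of the $g_j$ in place of uniform continuity.

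\textbf{Main obstacle.} The delicate point is the nested spread condition (ii): the $(\Lambda,\varepsilon)$-spread property is \emph{not} preserved under finite unions, so I cannot simply attach independently-constructed small spread Cantor pieces $D_m \subset W_m$ to $C_{k-1}$ and declare the union to be spread. Instead, at each stage $k$ I must produce a single $F_k$ inside the residual family $\mathscr{X}(\Lambda, 1/k, E) \cap 2^E$ that simultaneously contains the previous Cantor set $C_{k-1}$ and reaches into $W_k$; this is exactly the inductive step where a Mycielski-type refinement, using the density of the residual family of spread Cantor sets in small neighborhoods of prescribed finite configurations, is employed.
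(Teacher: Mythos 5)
Your sufficiency direction is correct: it is the standard application of Proposition~\ref{prop1} to finite subsets of $C$, and the distinctness of the chosen points is available because $C$ contains Cantor sets, so $V_j\cap C$ is infinite whenever non-empty. The steering-plus-continuity mechanism you describe for property (2) is also the right one. For property (1), however, ``the same recipe with $A$ in place of $B$'' does not literally parse: an arbitrary $A\subseteq C$ is contained in no single $C_n$, hence in no single $F_n$, so at stage $n$ you can only steer the cover of $C_n$, choose representatives in $A\cap B(z_i,\delta_n)$ where such intersections are non-empty, and then use that every $x\in A$ lies in $C_n$ for all large $n$ together with continuity of $g_j$ \emph{at} $x$ to get pointwise convergence. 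That is a fixable imprecision, not a gap.

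The genuine gap is the inductive step of your construction in the necessity direction. You complete the Cantor set $C_{k-1}$ and then seek $F_k\in\mathscr{X}(\Lambda,1/k,E)\cap 2^E$ with $F_k\supseteq C_{k-1}$ and $F_k\cap W_k\neq\emptyset$. No residuality or density statement can deliver this: once $C_{k-1}$ is infinite, the family $\{F\in 2^E: F\supseteq C_{k-1}\}$ is nowhere dense in $2^E$, so its intersection with the residual family $\mathscr{X}(\Lambda,1/k,E)\cap 2^E$ may perfectly well be empty, and ``density near prescribed finite configurations'' constrains only finite sets, never supersets of an already completed Cantor set. Nor does the spread property of $C_{k-1}$ itself rescue the step: its witnessing balls at scale $1/k$ need not come anywhere near $W_k$, and adjoining a new ball inside $W_k$ would require a steering time common to the old system and the new ball, whereas spreadness guarantees only \emph{existence} of steering times for the old system (no largeness such as syndeticity or positive density), so the two demands cannot be intersected; for a badly chosen $C_{k-1}$ the desired $F_k$ need not exist at all. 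This is exactly why the paper's proof (following \cite[Theorem A]{HLYZ17}) never inducts on completed Cantor sets: it applies the Kuratowski--Mycielski theorem to the residual family of Lemma~\ref{lem:spread-sets}, obtaining in one stroke an increasing sequence of Cantor sets with dense union all of whose members (indeed all finite unions of whose members) lie in that family. The Kuratowski--Mycielski construction works throughout with finite configurations of shrinking closed balls, completing the Cantor sets only in the limit, and it exploits the openness of each $\mathscr{X}(\Lambda,1/k,E)$ together with the heredity of spreadness (every closed subset of the union of witnessing balls is again spread). That simultaneous finite-stage bookkeeping is what your plan is missing; replacing your stage-$k$ extension step by the Kuratowski--Mycielski theorem (or by its proof) is the repair, after which your derivation of (1) and (2) goes through.
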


Let $\Lambda=\{a_1,a_2,\dotsc,a_\ell\}$ be a collection of sequences, and $(X,T)$ be a topological dynamical system with a metric $\rho$ on $X$.
For $n\in\bbn$, we say that an $n$-tuple $(x_1,x_2,\dotsc,x_n)\in X^n$ is \emph{$\Delta$-Li-Yorke chaotic along $\Lambda$}, if
\[
\limsup\limits_{k\to\infty}\min\limits_{1\leq i<j\leq n;1\leq p,q\leq \ell}\rho(T^{a_p(k)}x_i,T^{a_q(k)}x_j)>0
\]
and
\[
\liminf\limits_{k\to\infty}\max\limits_{1\leq i<j\leq n;1\leq p,q\leq \ell}\rho(T^{a_p(k)}x_i,T^{a_q(k)}x_j)=0,
\]
and \emph{$\Delta^*$-Li-Yorke chaotic along $\Lambda$}, if
\[
\limsup\limits_{k\to\infty}\min\limits_{1\leq i,j\leq n;1\leq p<q\leq \ell}\rho(T^{a_p(k)}x_i,T^{a_q(k)}x_j)>0
\]
and
\[
\liminf\limits_{k\to\infty}\max\limits_{1\leq i,j\leq n;1\leq p<q\leq \ell}\rho(T^{a_p(k)}x_i,T^{a_q(k)}x_j)=0.
\]
A subset $K$ of $X$ is called \emph{$\Delta$-Li-Yorke $n$-chaotic along $\Lambda$} (resp. \emph{$\Delta^*$-Li-Yorke $n$-chaotic along $\Lambda$})
if for any pairwise distinct points $x_1,x_2,\dotsc,x_n\in K$,
the $n$-tuple $(x_1,x_2,\dotsc,x_n)$ is $\Delta$-Li-Yorke chaotic along $\Lambda$ (resp. $\Delta^*$-Li-Yorke chaotic along $\Lambda$).
The topological dynamical system $(X,T)$ is called \emph{$\Delta$-Li-Yorke $n$-chaotic along $\Lambda$} (resp. \emph{$\Delta^*$-Li-Yorke $n$-chaotic along $\Lambda$})
if there is an uncountable  $\Delta$-Li-Yorke $n$-chaotic set along $\Lambda$ (resp. $\Delta^*$-Li-Yorke $n$-chaotic set along $\Lambda$).

\begin{rem}
If $\Lambda$ is only composed of a sequence $a(n)$,
then a $\Delta$-Li-Yorke $n$-chaotic tuple along $\Lambda$
is just the Li-Yorke $n$-chaotic tuple along the sequence $a(n)$.
If $\Lambda$ contains at least two sequences,
then a $\Delta^*$-Li-Yorke chaotic tuple along $\Lambda$
can be regarded as a multi-variant version of asynchronous Li-Yorke chaotic tuple, which was introduced in \cite{HLYZ17}.
\end{rem}

\begin{prop}\label{Li-Yorke chaotic}
Let $\Lambda=\{a_1,a_2,\dotsc,a_{\ell}\}$ be a collection of sequences
and $(X,T)$ be a topological dynamical system.
If there exists a $\Delta$-weakly mixing set along $\Lambda$ with at least two points,
then for every $n\geq 2$, $(X,T)$ is  $\Delta$-Li-Yorke $n$-chaotic
and  $\Delta^*$-Li-Yorke $n$-chaotic along $\Lambda$.
\end{prop}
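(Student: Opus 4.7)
The plan is to use Theorem \ref{thm-Delta-weak-mixing} to produce a Cantor set $K = C_1 \subseteq E$ and to show that this single uncountable set $K$ witnesses both $\Delta$-Li-Yorke $n$-chaos and $\Delta^*$-Li-Yorke $n$-chaos along $\Lambda$ for every $n\geq 2$. The point is that any pairwise distinct $x_1,\dotsc,x_n \in K$ form a finite, hence closed, subset $B = \{x_1,\dotsc,x_n\}$ of $C_1$, and every function $B \to E$ is automatically continuous, so part (2) of Theorem \ref{thm-Delta-weak-mixing} lets me prescribe the asymptotic behaviour of $T^{a_j(k)}x_i$ \emph{uniformly} in $(i,j)$ along a suitable subsequence of times.

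To handle the two liminf conditions simultaneously, I would fix any single point $y\in E$ and apply part (2) to the constant maps $h_j \equiv y$ on $B$. This produces an increasing sequence $\{q_k\}$ with $T^{a_j(q_k)} x_i \to y$ uniformly in $(i,j)$. By the triangle inequality,
\[
\rho\bigl(T^{a_p(q_k)}x_i, T^{a_q(q_k)}x_j\bigr)\longrightarrow 0
\]
as $k\to\infty$ for every choice of $(i,j,p,q)$, so the maximum over either of the index sets appearing in the definitions tends to $0$. This forces both liminfs to equal $0$.

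For the two limsup conditions, I would exploit the fact that $K=C_1$ is a Cantor set in $E$, so $E$ contains infinitely many points; in particular I can select $n\ell$ pairwise distinct points $w_{i,j} \in E$ with $i\in\{1,\dotsc,n\}$ and $j\in\{1,\dotsc,\ell\}$. Applying part (2) to $h_j(x_i) := w_{i,j}$ yields a subsequence $\{q'_k\}$ such that $T^{a_j(q'_k)} x_i \to w_{i,j}$ uniformly, and hence $\rho(T^{a_p(q'_k)}x_i, T^{a_q(q'_k)}x_j) \to \rho(w_{i,p}, w_{j,q})$. Setting
\[
\delta_0 := \min\bigl\{\rho(w_{i,p}, w_{j,q}) : (i,p)\neq (j,q)\bigr\} > 0,
\]
I would observe that in the $\Delta$-chaos definition the constraint $i<j$ already forces $(i,p)\neq (j,q)$ for any $p,q$, while in the $\Delta^*$-chaos definition the constraint $p<q$ forces $(i,p)\neq (j,q)$ for any $i,j$. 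In both cases the relevant minimum over indices is eventually bounded below by, say, $\delta_0/2$ along $\{q'_k\}$, so both limsups are $\geq \delta_0/2 > 0$.

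The substantive content is entirely loaded into Theorem \ref{thm-Delta-weak-mixing}, so there is no real obstacle here beyond choosing the target points correctly; the only mildly subtle observation is that a \emph{single} scattering pattern $(w_{i,j})_{i,j}$ — rather than two separate constructions — simultaneously handles both the $\Delta$ and the $\Delta^*$ separation conditions, because making all $n\ell$ targets distinct guarantees positive distance whenever the spatial index $i$ \emph{or} the sequence index $j$ differs.
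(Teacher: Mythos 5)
Your proof is correct and follows essentially the same route as the paper: both arguments apply Theorem \ref{thm-Delta-weak-mixing} to finite subsets of the Cantor structure, use constant target maps to force the $\liminf$ conditions to vanish, and use a single assignment of $n\ell$ pairwise distinct target points in $E$ (your $w_{i,j}$ is exactly the paper's $e_{(i-1)\ell+q}$) to bound both $\limsup$ conditions away from zero at once. The only cosmetic differences are that you take the chaotic set to be $C_1$ rather than the union $C=\bigcup_k C_k$, and you deduce that $E$ is infinite from the Cantor set rather than from perfectness of $E$; neither affects the argument.
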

\begin{proof}
Let $E$ be a  $\Delta$-weakly mixing set along $\Lambda$ with at least two points.
By Theorem \ref{thm-Delta-weak-mixing} there exists an increasing sequence of Cantor subsets $C_1\subset C_2\subset\dotsb$ of $E$, such that $C:=\bigcup\limits_{i=1}^{\infty}C_i$ is dense in $E$ and has the properties introduced in the Theorem \ref{thm-Delta-weak-mixing}.
For any $n\in\bbn$, now we shall show that $C$ is $\Delta$-Li-Yorke $n$-chaotic and $\Delta^*$-Li-Yorke $n$-chaotic along $\Lambda$.

Fix any distinct $x_1,x_2,\dotsc,x_n\in C$.
Since $E$ is $\Delta$-weakly mixing along $\Lambda$ with at least two points, $E$ is perfect. Then exist pairwise distinct points $e_1,e_2,\dotsc,e_{n\ell}\in E$. Let
$$\delta=\frac{1}{2}\min\limits_{1\leq i<j\leq n\ell}\rho(e_i,e_j)>0.$$
First we set $h_q:\{x_1,x_2,\dotsc,x_n\}\to E$, $h_q(x_i)=e_1$, for $i=1,2,\dotsc,n$ and $q=1,2,\dotsc,\ell$. Then there exists an increasing sequence $\{q_k\}$ of positive integers such that $\lim\limits_{k\to\infty}T^{a_q(q_k)}x_i=h_q(x_i)=e_1$, for every $i=1,2,\dotsc,n$ and $q=1,2,\dotsc\ell$. Thus we have
$$\lim\limits_{k\to\infty}\max\limits_{1\leq i<j\leq n;1\leq p,q\leq\ell}\rho(T^{a_p(q_k)}x_i,T^{a_q(q_k)}x_j)=\rho(e_1,e_1)=0,$$
and
$$\lim\limits_{k\to\infty}\max\limits_{1\leq i,j\leq n;1\leq p<q\leq\ell}\rho(T^{a_p(q_k)}x_i,T^{a_q(q_k)}x_j)=\rho(e_1,e_1)=0.$$
And we set $h_q':\{x_1,x_2,\dotsc,x_n\}\to E$, $h_q'(x_i)=e_{(i-1)\ell+q}$. Then there exists an increasing sequence $\{q_k'\}$ of positive integers such that $\lim\limits_{k\to\infty}T^{a_j(q_k)}x_i=h_q'(x_i)$ for every $i=1,2,\dotsc,n$ and $q=1,2,\dotsc,\ell$. Thus we have
\[\lim\limits_{k\to\infty}\min\limits_{1\leq i<j\leq n;1\leq p,q\leq\ell}\rho(T^{a_p(q_k)}x_i,T^{a_q(q_k)}x_j)
=\min\limits_{1\leq i<j\leq n;1\leq p,q\leq\ell}\rho(h_p'(x_i),h_q'(x_j))>\delta.\]
and
\[\lim\limits_{k\to\infty}\min\limits_{1\leq i,j\leq n;1\leq p<q\leq\ell}\rho(T^{a_p(q_k)}x_i,T^{a_q(q_k)}x_j)\\
=\min\limits_{1\leq i,j\leq n;1\leq p< q\leq\ell}\rho(h_p'(x_i),h_q'(x_j))>\delta.\]
This ends the proof.
\end{proof}

\section{Proof of theorem \ref{main-thm1}}
The aim of this section is to prove Theorem \ref{main-thm1}.
To do this, we first   recall some notions and basic results of entropy of a measure preserving system.

Let $(X,\mathscr{B},\mu,T)$ be a measure preserving system.
For a finite measurable partition $\alpha$,
the measure-theoretic entropy of $\mu$ relative to $\alpha$,
denoted by $h_{\mu}(T,\alpha)$,
is defined as
$$h_{\mu}(T,\alpha)=
\lim_{n\to\infty}\frac{1}{n}H_{\mu}
\biggl(\bigvee_{i=0}^{n-1}T^{-i}\alpha\biggr),$$
where $H_{\mu}(\alpha)=-\sum\limits_{A\in\alpha}\mu(A)\log \mu(A)$.
The measure-theoretic entropy of $\mu$ is defined as
$$h_{\mu}(X,T)=\sup\limits_{\alpha}h_{\mu}(T,\alpha),$$
where the supremum ranges over all finite partitions of $X$.

The Pinsker $\sigma$-algebra of a system $(X,\mathscr{B},\mu,T)$ is defined as
$$P_{\mu}(T)=\{A\in\mathscr{B}: h_{\mu}(T,\{A,X\backslash A\})=0\}.$$
It is easy to see that $P_{\mu}(T)$ is $T$-invariant. The Rohlin-Sinai theorem identifies the Pinsker $\sigma$-algebra as the ``remote past" of a generating partition (see \cite{Rokhlin-Sinai}).

\begin{thm}\label{Rohlin-Sinai} Let $(X,\mathscr{B},\mu,T)$  be a measure preserving system and $P_{\mu}(T)$ be its Pinsker sub-$\sigma$-algebra.  Then there exists  a sub-$\sigma$-algebra $\mathscr{P}$ of $\mathscr{B}$ such that
\[T^{-1}\mathscr{P}\subset \mathscr{P},\ \bigvee_{k=1}^{\infty}T^k\mathscr{P}=\mathscr{B}\text{ and } \bigcap_{k=0}^{\infty}T^{-k}\mathscr{P}=P_{\mu}(T).\]
\end{thm}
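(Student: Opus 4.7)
The plan is to follow the classical construction from \cite{Rokhlin-Sinai}; this theorem is recalled here as a tool for the proof of Theorem \ref{main-thm1} and is not new to the paper.

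First I would apply Rokhlin's generator theorem: since $(X,\mathscr{B})$ is a standard Borel space with probability measure $\mu$ and $T$ is invertible and measure preserving, there exists a countable measurable partition $\xi$ of $X$ with $H_\mu(\xi)<\infty$ such that $\bigvee_{n\in\bbz}T^n\xi=\mathscr{B}$ modulo $\mu$. I would then set $\mathscr{P}:=\bigvee_{n\leq 0}T^n\xi$, the ``past'' sub-$\sigma$-algebra generated by $\xi$. From $T^{-1}T^n\xi=T^{n-1}\xi$ one immediately gets $T^{-1}\mathscr{P}=\bigvee_{n\leq -1}T^n\xi\subset\mathscr{P}$, and from the generator property $\bigvee_{k\geq 1}T^k\mathscr{P}=\bigvee_{n\in\bbz}T^n\xi=\mathscr{B}$, so two of the three required properties are immediate.

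The core of the argument is the tail identification $\mathscr{P}_\infty:=\bigcap_{k\geq 0}T^{-k}\mathscr{P}=P_\mu(T)$. For the inclusion $P_\mu(T)\subseteq\mathscr{P}_\infty$, given $A\in P_\mu(T)$ I would form the refined generator $\xi\vee\{A,A^c\}$; by the Kolmogorov--Sinai theorem it realizes the full entropy, so $h_\mu(T,\xi\vee\{A,A^c\})=h_\mu(T,\xi)$, and expanding both sides via the Rokhlin--Sinai formula $h_\mu(T,\alpha)=H_\mu(\alpha\mid T^{-1}\bigvee_{n\leq 0}T^n\alpha)$ together with the chain rule and the vanishing $h_\mu(T,\{A,A^c\})=0$ will force $H_\mu(\{A,A^c\}\mid\mathscr{P})=0$, hence $A\in\mathscr{P}$; the $T$-invariance of $P_\mu(T)$ then gives $T^kA\in\mathscr{P}$, i.e.\ $A\in T^{-k}\mathscr{P}$, for every $k\geq 0$. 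For the reverse inclusion $\mathscr{P}_\infty\subseteq P_\mu(T)$, I would first verify that $\mathscr{P}_\infty$ is a $T$-invariant sub-$\sigma$-algebra; then for any finite partition $\beta$ measurable with respect to $\mathscr{P}_\infty$ the whole orbit $\bigvee_{j\in\bbz}T^j\beta$ lies in $\mathscr{P}$, and an analogous conditional-entropy manipulation yields $h_\mu(T,\beta)=0$, showing $\beta\subset P_\mu(T)$.

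The main obstacle will be the conditional-entropy bookkeeping in the tail identification, which is exactly the original content of \cite{Rokhlin-Sinai} and hinges on the fact that a finite-entropy generator realizes the full entropy of the system. In the case $h_\mu(T)=\infty$ one carries out the same argument with countable finite-entropy partitions and the countable form of Rokhlin's generator theorem, without changing the structure of the proof.
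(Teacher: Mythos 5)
Note first that the paper does not prove this statement at all: it is quoted as the classical Rokhlin--Sinai theorem with a citation to \cite{Rokhlin-Sinai}, so your proposal has to be judged as a reconstruction of the classical argument rather than against anything in the paper. Your outline does follow the standard finite-entropy template (pass to the past $\mathscr{P}=\bigvee_{n\le 0}T^n\xi$ of a generator, observe the first two properties are immediate, identify the tail with $P_\mu(T)$), but as a proof of the theorem in the stated generality it has genuine gaps.

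The first gap is at the very first step: a countable generator of finite entropy need not exist. Any generator $\xi$ satisfies $h_\mu(T)=h_\mu(T,\xi)\le H_\mu(\xi)$, so finite-entropy generators exist only when $h_\mu(T)<\infty$, and Rokhlin's countable generator theorem moreover requires aperiodicity: for the identity map on $[0,1]$ with Lebesgue measure, $\bigvee_{n\in\bbz}T^n\xi=\sigma(\xi)$ is generated by a countable partition and can never equal $\mathscr{B}$, so no countable generator exists (while the theorem itself is trivially true there with $\mathscr{P}=\mathscr{B}$). Since the statement concerns an arbitrary measure preserving system, these cases cannot be skipped. The second and more serious gap is your closing claim that the case $h_\mu(T)=\infty$ is ``the same argument \dots without changing the structure of the proof.'' It is not: when $h_\mu(T)=\infty$ the identity $h_\mu(T,\xi\vee\{A,A^c\})=h_\mu(T,\xi)$ reads $\infty=\infty$ and the conditional-entropy cancellation you rely on is meaningless; and the natural repair --- take finite partitions $\xi_1\le\xi_2\le\cdots$ generating $\mathscr{B}$ and let $\mathscr{P}$ be the join of their pasts --- fails outright. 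Indeed, in the $2$-shift with $\xi_0$ the time-zero partition and $\xi_n=\bigvee_{|k|\le n}T^k\xi_0$, the join of the pasts is $\bigvee_{m\in\bbz}T^m\xi_0=\mathscr{B}$, so the tail of $\mathscr{P}$ is $\mathscr{B}$ rather than the trivial Pinsker algebra; the tail does not commute with increasing limits, and controlling this inflation is precisely the content of \cite{Rokhlin-Sinai} (see also \cite{Glasner-Thouvenot-Weiss}). Finally, even in the finite-entropy aperiodic case, ``chain rule plus $h_\mu(T,\{A,A^c\})=0$ forces $H_\mu(\{A,A^c\}\mid\mathscr{P})=0$'' conceals the key step: one needs the Pinsker-type formula $h_\mu(T,\xi\vee\eta)=h_\mu(T,\eta)+H_\mu\bigl(\xi\mid\bigvee_{n\le -1}T^n\xi\vee\bigvee_{n\in\bbz}T^n\eta\bigr)$, the fact that equality in the monotonicity of conditional entropy yields conditional independence, and a martingale argument along $T^N\mathscr{P}\nearrow\mathscr{B}$; without these your ``bookkeeping'' does not close.
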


Now we are going to prove Theorem~\ref{main-thm1}.
Note that we follow some ideas in \cite{Assain}, \cite{Derrien-Lesigne} and \cite{Li-Qiao}.
\begin{proof}[Proof of Theorem \ref{main-thm1}]
Let $(X,\mathscr{B},\mu,T)$  be a measure preserving system.	
Choose a sub-$\sigma$-algebra $\mathscr{P}$ as in the Theorem \ref{Rohlin-Sinai}. Firstly, we assume $\mathscr{P}$-measurable.
Since $f_i\in L^{\infty}$, without lost of generality, we can assume $\Vert f_i\Vert _{\infty}\leq 1$, $i=1,2,\dotsc,\ell$. Given $\varepsilon>0$, by Theorem \ref{Martingale thm} there exists $m\in\bbn$ such that
$$
\Vert E(f_i|P_{\mu}(T))-E(f_i|T^{-m}\mathscr{P})\Vert_{L^2(\mu)}
<\varepsilon
$$
for every $i=1,2,\dotsc,\ell$.
To keep notations simply,  let $f_i^{\infty}=E(f_i|P_{\mu}(T))$ and $f_i^{m}=E(f_i|T^{-m}\mathscr{P})$ for $i=1,2,\dotsc,\ell$.
We also let $f_0=f_{\ell+1}=1$.
Then for every $N\in\bbn$ one has:

\begin{align}\label{e0}
&\biggl\Vert \frac{1}{N}\sum_{n=1}^{N}\biggl(\prod_{i=1}^{\ell}T^{a_i(n)}f_i-\prod_{i=1}^{\ell}T^{a_i(n)}f_i^{\infty}\biggr) \biggr\Vert _{L^2(\mu)}\nonumber\\
&=\biggl\Vert \frac{1}{N}\sum_{n=1}^{N}\biggl(\prod_{i=1}^{\ell}T^{a_i(n)}f_i^{\infty}-T^{a_1(n)}f_1\cdot\prod_{i=2}^{\ell}T^{a_i(n)}f_i^{\infty}+
T^{a_1(n)}f_1\cdot\prod_{i=2}^{\ell}T^{a_i(n)}f_i^{\infty}-\dotsc\nonumber\\
&\qquad\qquad-\prod_{i=1}^{\ell-1}T^{a_i(n)}f_i\cdot T^{a_{\ell}(n)}f_{\ell}^{\infty}+\prod_{i=1}^{\ell-1}T^{a_i(n)}f_i\cdot T^{a_{\ell}(n)}f_{\ell}^{\infty}-\prod_{i=1}^{\ell}T^{a_i(n)}f_i\biggr)\biggr\Vert_{L^2(\mu)}\nonumber\\
&\leq\sum_{h=1}^{\ell}\biggl\Vert\frac{1}{N}\sum_{n=1}^{N}\big(T^{a_1(n)}f_1\dotsc T^{a_h(n)}(f_h-f_h^{\infty}) T^{a_{h+1}(n)}f_{h+1}^{\infty}\dotsc T^{a_\ell(n)}f_\ell^{\infty}
\big) \biggr\Vert_{L^2(\mu)}\nonumber\\
&\leq\sum_{h=1}^{\ell}\biggl\Vert\frac{1}{N}\sum_{n=1}^{N}\big(T^{a_1(n)}f_1\dotsc T^{a_h(n)}(f_h-f_h^{m}) T^{a_{h+1}(n)}f_{h+1}^{\infty}\dotsc T^{a_\ell(n)}f_\ell^{\infty}
\big) \biggr\Vert_{L^2(\mu)}\nonumber\\
&+\sum_{h=1}^{\ell}\biggl\Vert\frac{1}{N}\sum_{n=1}^{N}\big(T^{a_1(n)}f_1\dotsc T^{a_h(n)}(f_h^{m}-f_h^{\infty}) T^{a_{h+1}(n)}f_{h+1}^{\infty}\dotsc T^{a_\ell(n)}f_\ell^{\infty}
\big) \biggr\Vert_{L^2(\mu)}\nonumber\\
&\leq \sum_{h=1}^{\ell}\biggl\Vert\frac{1}{N}\sum_{n=1}^{N}\big(T^{a_1(n)}f_1\dotsc T^{a_h(n)}(f_h-f_h^{m}) T^{a_{h+1}(n)}f_{h+1}^{\infty}\dotsc T^{a_\ell(n)}f_\ell^{\infty}
\big) \biggr\Vert_{L^2(\mu)}+ \ell\varepsilon.
\end{align}

Since $\Lambda$ satisfies Condition $(**)$, there exists $N_0\in\bbn$ such that $a_{i+1}(n)>a_i(n)+m$, for every $n>N_0$ and $i=1,2,\dotsc,\ell-1$. And there exists $N_1\in\bbn$ large enough, such that $\frac{N_0}{N_1}<\frac{\varepsilon^2}{16}$ and for every $i=1,2,\dotsc,\ell$, one has
$$\frac{1}{N_1^2}\#\{(n,k)\in [1,N_1]^2: \  |a_i(n)-a_i(k)|\leq m\}<\frac{\varepsilon^2}{8}.$$

For every fixed $i=1,2,\dotsc,\ell$, let $g_n^i=T^{a_1(n)}f_1^{\infty}\cdot T^{a_2(n)}f_2^{\infty}\dotsc T^{a_{i-1}(n)}f_{i-1}^{\infty}$, and $h_n^i=T^{a_{i+1}(n)}f_{i+1}\cdot T^{a_{i+2}(n)}f_{i+2}\dotsc T^{a_\ell(n)}f_\ell$, one has
\begin{align}\label{e1}
&\biggl\Vert\frac{1}{N_1}\sum_{n=1}^{N_1}(g_n^i\cdot T^{a_i(n)}f_i\cdot h_n^i-g_n^i\cdot T^{a_i(n)}f_i^{m}\cdot h_n^i)\biggr\Vert_{L^2({\mu})}^2\nonumber\\
&=\frac{1}{N_1^2}\sum_{n,k=1}^{N_1}\int \big(g_n^i T^{a_i(n)}f_i h_n^i-g_n^i T^{a_i(n)}f_i^m h_n^i\big)\big(g_k^i T^{a_i(k)}f_i h_k^i-g_k^i T^{a_i(k)}f_i^mh_k^i\big)d\mu\nonumber\\
&=\frac{1}{N_1^2}\sum_{n,k=1}^{N_1}(A_{nk}^i-B_{nk}^i+C_{nk}^i-D_{nk}^i),
\end{align}
where
$$A_{nk}^i=\int g_n^i\cdot g_k^i\cdot T^{a_i(n)}f_i\cdot T^{a_i(k)}f_i\cdot h_n^i\cdot h_k^i d\mu,$$
$$B_{nk}^i=\int g_n^i\cdot g_k^i\cdot T^{a_i(n)}f_i\cdot T^{a_i(k)}f_i^m\cdot h_n^i\cdot h_k^i d\mu,$$
$$C_{nk}^i=\int g_n^i\cdot g_k^i\cdot T^{a_i(n)}f_i^m\cdot T^{a_i(k)}f_i^m\cdot h_n^i\cdot h_k^id\mu,$$
$$D_{nk}^i=\int g_n^i\cdot g_k^i\cdot T^{a_i(n)}f_i^m\cdot T^{a_i(k)}f_i\cdot h_n^i\cdot h_k^id\mu.$$
For given $(n,k)\in\{N_0+1,N_0+2,\dotsc,N_1\}^2$, there are three cases.

Case 1: $a_i(n)>a_i(k)+m$. Recall that, $g_n^i,\ g_k^i$ are $P_{\mu}(T)$-measurable and $f_1,f_2,\dotsc,f_\ell$ are $\mathscr{P}$-measurable function, hence $g_n^i\cdot g_k^i\cdot T^{a_i(n)}f_i\cdot h_n^i\cdot h_k^i$ is $T^{-(a_i(k)+m)}\mathscr{P}$-measurable. Then one has
\begin{align*}
&A_{nk}^i=\int E\bigg(g_n^i\cdot g_k^i\cdot T^{a_i(n)}f_i\cdot T^{a_i(k)}f_i\cdot h_n^i\cdot h_k^i\big|T^{-(a_i(k)+m)}\mathscr{P}\bigg)d\mu\\
&=\int g_n^i\cdot g_k^i\cdot T^{a_i(n)}f_i \cdot E\bigg(T^{a_i(k)}f_i \big|T^{-(a_i(k)+m)}\mathscr{P}\bigg)\cdot h_n^i\cdot h_k^i d\mu\\
&=\int g_n^i\cdot g_k^i\cdot T^{a_i(n)}f_i \cdot T^{a_i(k)}E\bigg(f_i \big|T^{-m}\mathscr{P}\bigg)\cdot h_n^i\cdot h_k^i d\mu\\
&=B_{nk}^i.
\end{align*}
And
\begin{align*}
&D_{nk}^i=\int E\bigg(g_n^i\cdot g_k^i\cdot T^{a_i(n)}f_i^m\cdot T^{a_i(k)}f_i\cdot h_n^i\cdot h_k^i\big|T^{-(a_i(k)+m)}\mathscr{P}\bigg)d\mu\\
&=\int g_n^i\cdot g_k^i\cdot T^{a_i(n)}f_i^m\cdot E\bigg(T^{a_i(k)}f_i|T^{-(a_i(k)+m)}\mathscr{P}\bigg)\cdot h_n^i\cdot  h_k^id\mu \\
&=\int g_n^i\cdot g_k^i\cdot T^{a_i(n)}f_i^m\cdot T^{a_i(k)}E\bigg(f_i|T^{-m}\mathscr{P}\bigg)\cdot h_n^i\cdot  h_k^id\mu \\
&=C_{nk}^i.
\end{align*}

Case 2: $a_i(k)>a_i(n)+m$. Similar with Case 1 we have $A_{nk}^i=D_{nk}^i$ and $B_{nk}^i=C_{nk}^i$

Case 3: $|a_i(k)-a_i(n)|\leq m$. Since $\Vert f_i\Vert_{\infty}\leq 1$, one has
$$\int \bigg| \big(g_n^i\cdot T^{a_i(n)}f_i\cdot h_n^i-g_n^i\cdot T^{a_i(n)}f_i^m\cdot h_n^i\big)\cdot\big(g_k^i\cdot T^{a_i(k)}f_i\cdot h_k^i-g_k^i\cdot T^{a_i(k)}f_i^m\cdot h_k^i\big)\bigg|d\mu\leq 4.$$

To summing up, we set $F=\{1,2,\dotsc,N_0\}\times\{1,2,\dotsc,N_1\}\cup\{1,2,\dotsc,N_1\}\times\{1,2,\dotsc,N_0\}$,
\begin{align*}
(\ref{e1})&=\frac{1}{N_1^2}\bigg(\sum_{(n,k)\in F}\int\big(g_n^i\cdot T^{a_i(n)}(f_i-f_i^m)\cdot h_n^i\big)\cdot\big(g_k^i\cdot T^{a_i(k)}(f_i-f_i^m)\cdot h_k^i\big)d\mu\\
&+\sum_{\mbox{\tiny$\begin{array}{c}
N_0<n,k\leq N_1,\\
|a_i(n)-a_i(k)|\leq m\end{array}$}}\int\big(g_n^i\cdot T^{a_i(n)}(f_i-f_i^m)\cdot h_n^i\big)\cdot\big(g_k^i\cdot T^{a_i(k)}(f_i-f_i^m)\cdot h_k^i\big)d\mu\bigg)\\
&\leq 4\biggl(\frac{\#\{(n,k)\in [1,N_1]^2: \ |a_i(n)-a_i(k)|\leq m\}}{N_1^2}+2\frac{N_0}{N_1}\biggr)<4\Bigl(\frac{\varepsilon^2}{8}+2\frac{\varepsilon^2}{16}\Bigr)=\varepsilon^2.
\end{align*}

Thus we have ($\ref{e0})\leq \ell\varepsilon+\ell\varepsilon=2\ell\varepsilon$. Thus the conclusion holds for
$\mathscr{P}$-measurable functions in $L^{\infty}(\mu)$ because of the arbitrary of $\varepsilon$, also for $T^r\mathscr{P}$-measurable functions, $r\in\bbn$, for $\mu$ is $T$-invariant. For general functions $f_1,f_2,\dotsc,f_{\ell}\in L^{\infty}(\mu)$, there exists $T^k\mathscr{P}$-measurable functions $f_{1k},f_{2k},\dotsc,f_{\ell k}\in L^{\infty}(\mu)$ which  satisfy the conclusion and converge to $f_1,f_2,\dotsc,f_{\ell}$ when $k\to\infty$, and we can know this conclusion holds for $f_1,f_2,\dotsc,f_{\ell}$. Thus the result holds for all functions in $L^\infty(\mu)$.
\end{proof}

\begin{thm}\label{thm:Pinsker-simga-algebra}
	Let $\Lambda=\{a_1,a_2,\dotsc,a_\ell\}$ be a collection of sequences that satisfies Condition $(**)$.
	Let $(X,\mathscr{B},\mu,T)$ be a  measure preserving system and
	$P_\mu(T)$ be the Pinsker $\sigma$-algebra.
	Then for any $f_0,f_1,\dotsc,f_\ell\in L^\infty(\mu)$,
	\[
	\frac{1}{N}\sum_{n=1}^N\int
	\bigg| E\biggl(f_0\prod_{i=1}^{\ell}T^{a_i(n)}f_i\big|P_{\mu}(T)\biggr)
	-E\big(f_0|P_{\mu}(T)\big)
	\prod_{i=1}^{\ell}T^{a_i(n)}E\big(f_i|P_{\mu}(T)\big)\bigg|^2d\mu\to 0\]
	as $N\to \infty $.
\end{thm}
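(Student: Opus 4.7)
The plan is to expand the integrand and reduce the statement to two applications of Theorem \ref{main-thm1}: one in the original system and one lifted to the relatively independent self-joining over the Pinsker $\sigma$-algebra. Write $\mathscr{F}:=P_\mu(T)$, $g_i:=E(f_i|\mathscr{F})$, $U_n:=f_0\prod_{i=1}^{\ell} T^{a_i(n)}f_i$, and $V_n:=g_0\prod_{i=1}^{\ell} T^{a_i(n)}g_i$. Since $V_n$ is $\mathscr{F}$-measurable, $\int E(U_n|\mathscr{F})V_n\,d\mu=\int U_nV_n\,d\mu$, so expanding the square gives
\[
\|E(U_n|\mathscr{F})-V_n\|_{L^2(\mu)}^2=B_n-2C_n+D_n,
\]
where $B_n:=\int E(U_n|\mathscr{F})^2\,d\mu$, $C_n:=\int U_nV_n\,d\mu$, $D_n:=\int V_n^2\,d\mu$. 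Writing $A_N,B_N,C_N,D_N$ for the corresponding Cesàro means, one has $A_N=(B_N-D_N)-2(C_N-D_N)$, so the task is reduced to proving $C_N-D_N\to 0$ and $B_N-D_N\to 0$.

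For the first limit, apply Theorem \ref{main-thm1} in $(X,\mathscr{B},\mu,T)$ with the bounded functions $h_j:=f_jg_j$. Since $g_j$ is $\mathscr{F}$-measurable, $E(h_j|\mathscr{F})=g_jE(f_j|\mathscr{F})=g_j^2$, so
\[
\bigg\|\frac{1}{N}\sum_{n=1}^N\bigg(\prod_{j=1}^{\ell}T^{a_j(n)}(f_jg_j)-\prod_{j=1}^{\ell}T^{a_j(n)}g_j^2\bigg)\bigg\|_{L^2(\mu)}\to 0.
\]
Multiplying by the bounded function $f_0g_0$ and integrating, the first piece is precisely $C_N$. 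In the second piece, the factor $g_0\prod_jT^{a_j(n)}g_j^2$ is $\mathscr{F}$-measurable, so one may pull this through the expectation and replace $f_0$ by $E(f_0|\mathscr{F})=g_0$, which yields exactly $D_n$. Hence $C_N-D_N\to 0$.

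For $B_N-D_N\to 0$, lift the computation to the relatively independent self-joining $\nu:=\mu\times_{\mathscr{F}}\mu$ with $S:=T\times T$. Using the identity $\int E(H|\mathscr{F})^2\,d\mu=\int H\otimes H\,d\nu$ (a direct consequence of the disintegration defining $\nu$), one obtains
\[
B_n=\int(f_0\otimes f_0)\prod_{j=1}^{\ell}S^{a_j(n)}(f_j\otimes f_j)\,d\nu,\quad D_n=\int(g_0\otimes g_0)\prod_{j=1}^{\ell}S^{a_j(n)}(g_j\otimes g_j)\,d\nu.
\]
Apply Theorem \ref{main-thm1} in the measure preserving system $(X\times X,\nu,S)$ with the functions $\phi_j:=f_j\otimes f_j$. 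The essential structural input is that $P_\nu(S)$ coincides with the diagonal $\tilde{\mathscr{F}}:=\pi^{-1}(\mathscr{F})$; together with \eqref{e8} this gives $E(\phi_j|P_\nu(S))=g_j\otimes g_j$, so Theorem \ref{main-thm1} becomes
\[
\bigg\|\frac{1}{N}\sum_{n=1}^N\prod_{j=1}^{\ell}S^{a_j(n)}\big((f_j\otimes f_j)-(g_j\otimes g_j)\big)\bigg\|_{L^2(\nu)}\to 0.
\]
Pairing against the bounded $f_0\otimes f_0$ and using that $\prod_jS^{a_j(n)}(g_j\otimes g_j)$ is $\tilde{\mathscr{F}}$-measurable so that the factor $E(f_0\otimes f_0|\tilde{\mathscr{F}})=g_0\otimes g_0$ can be pulled through, one obtains $B_N-D_N\to 0$, and combined with the previous step this yields $A_N\to 0$.

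The hardest ingredient is the identification $P_\nu(S)=\tilde{\mathscr{F}}$: it expresses the fact that the extension $\mathscr{F}\hookrightarrow\mathscr{B}$ is relatively Kolmogorov (by the very definition of the Pinsker factor) and that the relatively independent self-joining of a relatively Kolmogorov extension over its common base remains relatively Kolmogorov. An equivalent alternative that avoids quoting this external theorem is to transport the proof of Theorem \ref{main-thm1} verbatim to $(X\times X,\nu,S)$ using $\tilde{\mathscr{P}}:=\mathscr{P}\otimes\mathscr{P}$ as the Rohlin-Sinai-type past — the conditions $S^{-1}\tilde{\mathscr{P}}\subseteq\tilde{\mathscr{P}}$ and $\bigvee_kS^k\tilde{\mathscr{P}}=\mathscr{B}\otimes\mathscr{B}$ are automatic, while $\bigcap_kS^{-k}\tilde{\mathscr{P}}=\tilde{\mathscr{F}}$ (mod $\nu$) follows from $\bigcap_kT^{-k}\mathscr{P}=\mathscr{F}$ together with the identity $\pi_1^{-1}(\mathscr{F})=\pi_2^{-1}(\mathscr{F})$ $\nu$-almost surely, and the same Condition $(**)$ variance calculation then directly shows that $\tilde{\mathscr{F}}$ is a characteristic factor for $(X\times X,\nu,S)$ along $\Lambda$, which is exactly what is needed.
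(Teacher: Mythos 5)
Your proof is correct, and at its core it rests on the same two inputs as the paper's: Theorem \ref{main-thm1}, and the identification $P_{\nu}(T\times T)=\pi^{-1}(P_{\mu}(T))$ for the relatively independent self-joining $\nu=\mu\times_{P_{\mu}(T)}\mu$ (quoted from Danilenko and Glasner--Thouvenot--Weiss exactly as the paper quotes it), combined with formula \eqref{e8}. Where you genuinely differ is the algebraic reduction. The paper telescopes the difference $E\bigl(f_0\prod_{i}T^{a_i(n)}f_i|P_{\mu}(T)\bigr)-E(f_0|P_{\mu}(T))\prod_{i}T^{a_i(n)}E(f_i|P_{\mu}(T))$ into $\ell$ terms, each containing one factor $f_h-E(f_h|P_{\mu}(T))$ with vanishing conditional expectation, and for such zero-mean data it lifts to the self-joining to get $\frac{1}{N}\sum_{n=1}^{N}\int\bigl|E\bigl(f_0\prod_{i}T^{a_i(n)}f_i|P_{\mu}(T)\bigr)\bigr|^2d\mu\to0$ (steps \eqref{eq:fi-mean-0}, \eqref{eq:fi-P-mu-mean-0}, \eqref{eq:decompose-fi}). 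You instead expand the square as $B_n-2C_n+D_n$ and kill the two differences separately: the cross-term estimate $C_N-D_N\to0$ comes from Theorem \ref{main-thm1} applied downstairs to the products $f_jg_j$ --- a step with no counterpart in the paper --- while $B_N-D_N\to0$ is the self-joining computation, which is where both proofs concentrate their work. The two reductions are of comparable length; yours avoids the telescoping bookkeeping, the paper's avoids the cross term.

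Two caveats. First, a notational slip: what Theorem \ref{main-thm1} gives on $(X\times X,\nu,S)$, $S=T\times T$, is $\frac{1}{N}\sum_{n=1}^{N}\bigl(\prod_{j=1}^{\ell}S^{a_j(n)}(f_j\otimes f_j)-\prod_{j=1}^{\ell}S^{a_j(n)}(g_j\otimes g_j)\bigr)\to0$ in $L^2(\nu)$, a difference of products, not the product of differences $\prod_{j=1}^{\ell}S^{a_j(n)}\bigl((f_j\otimes f_j)-(g_j\otimes g_j)\bigr)$ as written in your display; your subsequent pairing argument uses the correct form, so this is cosmetic. Second, your proposed alternative for avoiding the quoted theorem does not work as stated: the equality $\bigcap_{k}S^{-k}(\mathscr{P}\otimes\mathscr{P})=\pi^{-1}(P_{\mu}(T))$ mod $\nu$ does \emph{not} follow formally from $\bigcap_{k}T^{-k}\mathscr{P}=P_{\mu}(T)$ together with $\pi_1^{-1}(P_{\mu}(T))=\pi_2^{-1}(P_{\mu}(T))$ mod $\nu$. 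The inclusion $\supseteq$ is easy, but a decreasing intersection of product $\sigma$-algebras can be strictly larger than the corresponding product of the intersections (note that Theorem \ref{Rohlin-Sinai} asserts the existence of \emph{some} exhaustive past whose tail is the Pinsker algebra, not that \emph{every} exhaustive past --- such as $\mathscr{P}\otimes\mathscr{P}$ --- has this property: $\mathscr{B}$ itself is always an exhaustive past). Ruling out this enlargement is precisely the nontrivial content of the Danilenko/Glasner--Thouvenot--Weiss theorem. Since your main argument quotes that theorem anyway, this gap affects only your optional remark, not the proof.
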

\begin{proof}
Fix $f_0,f_1,\dotsc,f_\ell\in L^\infty(\mu)$.
By Theorem \ref{main-thm1}, we have
\begin{equation}
\lim\limits_{N\to\infty}\frac{1}{N}\sum_{n=1}^{N}\biggl(\prod_{i=1}^{\ell}T^{a_i(n)}f_i-\prod_{i=1}^{\ell}T^{a_i(n)}
E(f_i|P_\mu(T))\biggr)=0 \label{eq:f-i-L2}
\end{equation}
in $L^2(\mu)$. At first, we assume that there exists some $h\in\{1,2,\dotsc,\ell\}$ such that $E(f_h|P_{\mu}(T))=0$. Then by \eqref{eq:f-i-L2}, we have
\begin{align}\label{eq:fi-mean-0}
\bigg\vert\frac{1}{N}\sum_{n=1}^{N}&\int
E\biggl(f_0\prod_{i=1}^\ell T^{a_i(n)}f_i|P_\mu(T)\biggr) d\mu\bigg\vert \nonumber \\
& =\bigg\vert\frac{1}{N}\sum_{n=1}^{N}\int
f_0\prod_{i=1}^\ell T^{a_i(n)}f_i d\mu\bigg\vert\nonumber\\
&\leq\Vert f_0\Vert_{L^2(\mu)}\cdot\Vert\frac{1}{N}\sum_{n=1}^N\prod_{i=1}^{\ell}T^{a_i(n)}f_i\Vert_{L^2(\mu)}\to0,\ as\ N\to\infty.
\end{align}

Consider the relative product space $(X\times X,\mathscr{B}\times\mathscr{B},\lambda,T\times T)$, where $\lambda=\mu\times_{P_{\mu}(T)}\mu$.
Then one has $P_{\lambda}(T)=\pi^{-1}(P_{\mu}(T))$
(see e.g. \cite[Theorem 0.4(iii)]{Danilenko}, or \cite[Theorem 4]{Glasner-Thouvenot-Weiss} for free action).
For every $g_1,g_2\in L^2(X,\mathscr{B},\mu,T)$,
by \eqref{e8}, one has
$E(g_1\otimes g_2|P_\lambda(T))=E(g_1|P_{\mu}(T))E(g_2|P_{\mu}(T))$.
Applying \eqref{eq:fi-mean-0} to the relative product space $(X\times X,\mathscr{B}\times\mathscr{B},\lambda,T)$, we have
\begin{align}
0=\lim_{N\to\infty}&\frac{1}{N}\sum_{n=1}^{N}
\int E\bigg( f_0\otimes f_0\prod_{i=1}^{\ell}T^{a_i(n)}\times T^{a_i(n)}
(f_i\otimes f_i) \big| P_{\lambda}(T)\bigg) d\lambda \nonumber\\
&=\lim_{N\to\infty}\frac{1}{N}
\sum_{n=1}^{N}\int
E\bigg(\big(f_0\prod_{i=1}^\ell T^{a_i(n)}f_i\big)\otimes
\big(f_0\prod_{i=1}^\ell T^{a_i(n)}f_i \big)
\big| P_{\lambda}(T)\bigg)d\lambda \nonumber\\
&=\lim_{N\to\infty}\frac{1}{N}\sum_{n=1}^{N}
\int E\bigg(f_0\prod\limits_{i=1}^{\ell} T^{a_i(n)}f_i\big|P_{\mu}(T)\bigg)\otimes
E\bigg(f_0\prod\limits_{i=1}^{\ell} T^{a_i(n)}f_i\big|P_{\mu}(T)\bigg)d\lambda \nonumber\\
&=\lim_{N\to\infty}\frac{1}{N}\sum_{n=1}^{N}
\int \bigg|E\bigg(f_0\prod_{i=1}^{\ell}T^{a_i(n)}f_i
\big|P_{\mu}(T)\bigg)\bigg|^2d\mu.
\label{eq:fi-P-mu-mean-0}
\end{align}
This implies the conclusion holds if $E(f_h|P_{\mu}(T))=0$, for some $h\in\{1,2,\dotsc,\ell\}$. For the general case, one has
\begin{align}\label{eq:decompose-fi}
&E\biggl(f_0\prod_{i=1}^{\ell}T^{a_i(n)}f_i\bigg|P_{\mu}(T)\biggr)
-E\big(f_0|P_{\mu}(T)\big)
\prod_{i=1}^{\ell}T^{a_i(n)}E\big(f_i|P_{\mu}(T)\big) \nonumber \\
&= E\biggl(\prod_{i=0}^{\ell}T^{a_i(n)}f_i-\prod_{i=0}^{\ell}T^{a_i(n)}E(f_i|P_{\mu}(T)) \bigg|P_\mu(T)\biggr)\nonumber\\
&=\sum_{h=0}^{\ell}E\biggl(\prod_{i=0}^{h-1}T^{a_i(n)}f_i\cdot T^{a_h(n)}\bigl(f_h-E(f_h|P_{\mu}(T))\bigr)\prod_{j=h+1}^{\ell}E(f_j|P_{\mu}(T)) \bigg|P_{\mu}(T)\biggr)\nonumber\\
&=\sum_{h=1}^{\ell}E\biggl(f_0\cdot\prod_{i=1}^{h-1}T^{a_i(n)}f_i\cdot T^{a_h(n)}\bigl(f_h-E(f_h|P_{\mu}(T))\bigr)\prod_{j=h+1}^{\ell}E(f_j|P_{\mu}(T)) \bigg|P_{\mu}(T)\biggr)\nonumber\\
\end{align}
where $a_0(n)=0$.
Note that $E\big(f_h-E(f_h|P_{\mu}(T))\big|P_{\mu}(T)\big)=0$ for $h=1,\dotsc,\ell$.
Now the result follows from
 \eqref{eq:fi-P-mu-mean-0} and \eqref{eq:decompose-fi} immediately.
\end{proof}

\begin{prop}\label{prop:Pinsker-density}
Let $\Lambda=\{a_1,a_2,\dotsc,a_\ell\}$ be a collection of sequences that satisfies Condition $(**)$.
Let $(X,\mathscr{B},\mu,T)$ be a measure preserving system,
$P_\mu(T)$ be the Pinsker $\sigma$-algebra
and $\mu=\int \mu_z d\mu(z)$ be the disintegration of $\mu$ over $P_{\mu}(T)$.
Then for any $A_0,A_1,\dotsc,A_\ell\in\mathscr{B}$ and $\varepsilon,\delta>0$,
there exists  a subset $F\subset\bbn$ with density $1$
such that for any $n\in F$,
\[
\mu\biggl\{z\in X\colon \biggl|\mu_z \biggl(A_0\cap \bigcap_{i=1}^\ell T^{-a_i(n)}A_i\biggr)
-\mu_z(A_0) \prod_{i=1}^\ell \mu_z(T^{-a_i(n)}A_i)\biggr|<\varepsilon\biggr\}>1-\delta.
\]
\end{prop}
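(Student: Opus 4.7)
The plan is to apply Theorem~\ref{thm:Pinsker-simga-algebra} to the indicator functions $f_i=\mathbf{1}_{A_i}$ for $i=0,1,\dots,\ell$, rewrite the resulting $L^2$-statement as a mean-square estimate on the disintegration measures, and then extract the density-$1$ subset via Markov's inequality together with a Koopman--von Neumann style argument.

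First I would translate everything into the language of the disintegration. Using \eqref{eq:Ef-mux}, the chosen $f_i$ satisfy $f_0\prod_{i=1}^\ell T^{a_i(n)}f_i=\mathbf{1}_{A_0\cap\bigcap_{i=1}^\ell T^{-a_i(n)}A_i}$, so
\[
E\biggl(f_0\prod_{i=1}^\ell T^{a_i(n)}f_i\,\Big|\,P_\mu(T)\biggr)(z)=\mu_z\biggl(A_0\cap\bigcap_{i=1}^\ell T^{-a_i(n)}A_i\biggr).
\]
For the product appearing on the other side of Theorem~\ref{thm:Pinsker-simga-algebra}, the $T$-invariance of $P_\mu(T)$ together with uniqueness of the disintegration yields the standard equivariance $\mu_{Tz}=T_*\mu_z$ for $\mu$-a.e.\ $z$, and consequently
\[
T^{a_i(n)}E(\mathbf{1}_{A_i}\mid P_\mu(T))(z)=\mu_{T^{a_i(n)}z}(A_i)=\mu_z(T^{-a_i(n)}A_i).
\]
Setting
\[
D_n(z):=\bigg|\mu_z\biggl(A_0\cap\bigcap_{i=1}^\ell T^{-a_i(n)}A_i\biggr)-\mu_z(A_0)\prod_{i=1}^\ell\mu_z(T^{-a_i(n)}A_i)\bigg|,
\]
Theorem~\ref{thm:Pinsker-simga-algebra} then specialises to $\frac{1}{N}\sum_{n=1}^N\int D_n(z)^2\,d\mu(z)\to 0$ as $N\to\infty$.

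Given $\varepsilon,\delta>0$, Markov's inequality gives $\mu\{z\in X:D_n(z)\geq\varepsilon\}\leq\varepsilon^{-2}\int D_n(z)^2\,d\mu(z)$, so it suffices to find a density-$1$ set $F\subset\bbn$ on which $\int D_n^2\,d\mu<\delta\varepsilon^2$. Let $E=\{n\in\bbn:\int D_n(z)^2\,d\mu(z)\geq\delta\varepsilon^2\}$. Since $D_n^2\geq 0$, the inequality $\delta\varepsilon^2\cdot\#(E\cap[1,N])\leq\sum_{n=1}^N\int D_n^2\,d\mu$ combined with the Cesàro convergence above forces $\overline{D}(E)=0$; therefore $F:=\bbn\setminus E$ has density $1$, and for $n\in F$ one has $\mu\{z\in X:D_n(z)<\varepsilon\}>1-\delta$, as required.

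The main obstacle has already been handled by Theorem~\ref{thm:Pinsker-simga-algebra}; what remains is essentially bookkeeping. The only nontrivial step is the equivariance $\mu_{Tz}=T_*\mu_z$, but this is standard: the pushforward $T_*\mu_z$ satisfies the defining properties of the disintegration of $\mu$ over $P_\mu(T)$ at the point $Tz$ (using that $P_\mu(T)$ is $T$-invariant), so the $\mu$-almost everywhere uniqueness of the disintegration forces the equality.
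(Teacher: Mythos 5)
Your proposal is correct and follows essentially the same route as the paper: apply Theorem~\ref{thm:Pinsker-simga-algebra} to the indicators $\mathbf{1}_{A_i}$, identify the conditional expectations with the disintegration measures $\mu_z$ via \eqref{eq:Ef-mux}, and extract the density-one set by a Markov-type inequality. The only cosmetic differences are that you justify $T^{a_i(n)}E(\mathbf{1}_{A_i}\mid P_\mu(T))(z)=\mu_z(T^{-a_i(n)}A_i)$ through the equivariance $\mu_{Tz}=T_*\mu_z$ rather than by commuting $T^{a_i(n)}$ through the conditional expectation, and you prove the Koopman--von Neumann density extraction by hand where the paper cites \cite[Theorem 1.20]{W82}.
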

\begin{proof}
For $i=0,1,\dotsc,\ell$, let $f_i=\mathbf{1}_{A_i}$.
By \eqref{eq:Ef-mux}, for $\mu$-a.e. $z\in X$,
\begin{align*}
E\biggl(f_0\prod_{i=1}^{\ell}T^{a_i(n)}f_i\bigg|P_{\mu}(T)\biggr)(z)
&=E\biggl(\mathbf{1}_{A_0\cap\bigcap_{i=1}^{\ell}T^{-a_i(n)}A_i}\bigg|P_{\mu}(T)\biggr)(z)\\
&=\mu_z \biggl(A_0\cap \bigcap_{i=1}^\ell T^{-a_i(n)}A_i\biggr)
\end{align*}
and
\begin{align*}
E\bigl(f_0|P_{\mu}(T)\bigr)(z)
&\prod_{i=1}^{\ell}T^{a_i(n)}E\bigl(f_i|P_{\mu}(T)\bigr)(z)\\
&=E\bigl(f_0|P_{\mu}(T)\bigr)(z)
\prod_{i=1}^{\ell}E\bigl(\mathbf{1}_{T^{-a_i(n)}A_i}|P_{\mu}(T)\bigr)(z)\\
&=\mu_z(A_0)\prod_{i=1}^{\ell}\mu_z(T^{-a_i(n)}A_i).
\end{align*}
By Theorem~\ref{thm:Pinsker-simga-algebra},
\[
\lim_{N\to\infty}\frac{1}{N}\sum\limits_{n=1}^N\int
\bigg|
\mu_z \biggl(A_0\cap \bigcap_{i=1}^\ell T^{-a_i(n)}A_i\biggr)
-
\mu_z(A_0)\prod_{i=1}^{\ell}\mu_z(T^{-a_i(k)}A_i)
\biggr| d\mu(z)=0.
\]
Now by \cite[Theorem 1.20]{W82} there exists a subset $F\subset\bbn$ with density $1$ such that for any $n\in F$,
\[
\int
\bigg|
\mu_z \biggl(A_0\cap \bigcap_{i=1}^\ell T^{-a_i(n)}A_i\biggr)
-
\mu_z(A_0)\prod_{i=1}^{\ell}\mu_z(T^{-a_i(n)}A_i)
\biggr| d\mu(z)<\varepsilon \delta.
\]
This implies that for any $n\in F$,
\[
\mu\biggl\{z\in X\colon \biggl|\mu_z \biggl(A_0\cap \bigcap_{i=1}^\ell T^{-a_i(n)}A_i\biggr)
-\mu_z(A_0) \prod_{i=1}^\ell \mu_z(T^{-a_i(n)}A_i)\biggr|\geq \varepsilon\biggr\}<\delta,
\]
and then ends the proof.
\end{proof}

\section{Proof of Theorem \ref{main-thm2}}

The aim of this section is to prove Theorem \ref{main-thm2}. To do this, we will first prove the following auxiliary lemma.
\begin{lem}\label{lem:key-lemma}
Let $\Lambda=\{a_1,a_2,\dotsc,a_\ell\}$ be a collection of sequences that is good for $\lim\inf$-$\ell$-recurrence and satisfies Condition $(**)$.
Let $(X,\mathscr{B},\mu,T)$ be a measure preserving system, $P_{\mu}(T)$ be the Pinsker $\sigma$-algebra and $\mu=\int \mu_z d\mu(z)$ be the disintegration of $\mu$ over $P_{\mu}(T)$.
For every $M\in\bbn$, if $U_1,U_2,\dotsc,U_M$ are non-empty subsets in $\mathscr{B}$ such that
$$\mu\big(\{z\in X,\mu_z(U_i)>0,i=1,2,\dotsc,M\}\big)>0,$$
then there exists $L\in\bbn$ and $c>0$ such that
$$\mu\big(\{z\in X: \mu_z(U_s^L)>c: s\in\{1,2,\dotsc,M\}^{\ell+1}\}\big)>0$$
where $U_s^L=U_{s(1)}\cap T^{-a_1(L)}U_{s(2)}\cap\dotsc\cap T^{-a_\ell(L)}U_{s(\ell+1)}$ for every $s\in\{1,2,\dotsc, M\}^{\ell+1}$.
\end{lem}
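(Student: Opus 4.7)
The plan is to combine Proposition~\ref{prop:Pinsker-density} (which relies on Condition $(**)$) with the $\liminf$-$\ell$-recurrence hypothesis, linked through a common ``anchor'' set $B_0\in P_\mu(T)$ on which all $\mu_z(U_i)$ are uniformly bounded below.

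Since $z\mapsto \mu_z(U_i)$ is $P_\mu(T)$-measurable, the set $\{z:\mu_z(U_i)>0\text{ for all }i\}$ lies in $P_\mu(T)$, so by $\sigma$-continuity there exist $\epsilon>0$ and $B_0\in P_\mu(T)$ with $\mu(B_0)>0$ and $\mu_z(U_i)\geq \epsilon$ for every $z\in B_0$ and every $i=1,\dotsc,M$. Applying the good for $\liminf$-$\ell$-recurrence hypothesis to $A:=B_0$, put $\beta:=\liminf_N \frac{1}{N}\sum_{n=1}^N \mu(C_n)>0$, where
\[ C_n := B_0\cap \bigcap_{i=1}^\ell T^{-a_i(n)}B_0. \]
A routine averaging argument then shows that $E:=\{n\in\bbn : \mu(C_n)\geq \beta/4\}$ has lower density at least $\beta/4$. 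Using the $T$-invariance of $P_\mu(T)$ together with the standard identity $\mu_z(T^{-k}U)=\mu_{T^k z}(U)$ (valid for $\mu$-a.e.\ $z$), every $z\in C_n$ satisfies
\[ \mu_z(U_{s(1)})\cdot \prod_{i=1}^\ell \mu_z(T^{-a_i(n)}U_{s(i+1)})\geq \epsilon^{\ell+1} \qquad \text{for every } s\in\{1,\dotsc,M\}^{\ell+1}. \]

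For each of the $M^{\ell+1}$ tuples $s$, I apply Proposition~\ref{prop:Pinsker-density} with $A_0=U_{s(1)}$, $A_i=U_{s(i+1)}$, $\varepsilon':=\epsilon^{\ell+1}/2$, and some fixed $\delta'<\beta/(8M^{\ell+1})$, obtaining a density-one set $F_s\subset\bbn$. The finite intersection $F:=\bigcap_s F_s$ still has density one, so $E\cap F$ is infinite. Pick any $L\in E\cap F$, and let $G^L\subset X$ be the intersection over all $s$ of the ``approximation-good'' sets at time $L$ provided by Proposition~\ref{prop:Pinsker-density}. By subadditivity $\mu(G^L)\geq 1-M^{\ell+1}\delta'$, hence
\[ \mu(C_L\cap G^L)\geq \beta/4 - M^{\ell+1}\delta' \geq \beta/8 > 0. \]
For every $z\in C_L\cap G^L$ and every $s$, the $\varepsilon'$-approximation combined with the product lower bound $\epsilon^{\ell+1}$ yields $\mu_z(U_s^L)\geq \epsilon^{\ell+1}-\varepsilon'=\epsilon^{\ell+1}/2=:c$, giving the required $L$ and $c$.

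The main obstacle is a bookkeeping one: the error parameters $\varepsilon'$ and $\delta'$ in Proposition~\ref{prop:Pinsker-density} must be fixed \emph{after} one has pinned down both the uniform product lower bound $\epsilon^{\ell+1}$ on $C_n$ and the recurrence density $\beta$, but \emph{before} selecting $L$, so that the union of the $M^{\ell+1}$ exceptional sets of mass $\delta'$ cannot absorb the mass $\beta/4$ of $C_L$. Once the constants are ordered in this way, the single time $L\in E\cap F$ works simultaneously for every tuple $s$, which is precisely what is required.
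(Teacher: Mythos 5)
Your proof is correct and follows essentially the same route as the paper: both arguments find a Pinsker-measurable set on which all $\mu_z(U_i)$ are uniformly bounded below (your $B_0$ is the paper's $\Omega_{p_0}$), apply the $\liminf$-$\ell$-recurrence hypothesis to it to get a positive-lower-density set $E$ of good return times, intersect with the density-one set $F=\bigcap_s F_s$ from Proposition~\ref{prop:Pinsker-density} with the error parameters $\varepsilon'$, $\delta'$ fixed in exactly the same order relative to $\epsilon^{\ell+1}$ and the recurrence density, and then use $\mu_z(T^{-k}U)=\mu_{T^kz}(U)$ to pass from the product lower bound to $\mu_z(U_s^L)$. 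The only cosmetic difference is at the very end: you get $\mu_z(U_s^L)\geq \epsilon^{\ell+1}/2$ while the statement asks for a strict inequality, so take $c$ strictly smaller than $\epsilon^{\ell+1}/2$ (as the paper does with its choice $0<c<1/p_0^{\ell+1}-\varepsilon$).
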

\begin{proof}
For each $p\in\bbn$, let
\[
\Omega_p=\bigl\{z\in X:
\mu_z(U_i)>\tfrac{1}{p},i=1,2,\dotsc,M\bigl\}.
\]
By the assumption, we have
$\mu\bigl(\bigcup_{p=1}^\infty \Omega_p\bigr)>0$.
Then there exists $p_0\in\bbn$ such that $\mu(\Omega_{p_0})>0$.
Since $\Lambda$ is good for $\lim\inf$-$\ell$-recurrence,
there exists $c_0>0$ such that
$$
\liminf_{N\to\infty}\frac{1}{N}\sum_{n=1}^{N}
\mu\bigl(\Omega_{p_0}\cap T^{-a_1(n)}\Omega_{p_0}\cap\dotsb\cap T^{-a_\ell(n)}\Omega_{p_0}\bigr)>c_0.
$$
Let $E=\{k\in\bbn:\mu(\Omega_{p_0}\cap T^{-a_1(k)}\Omega_{p_0}\cap\dotsb\cap T^{-a_\ell(k)}\Omega_{p_0})>c_0\}$.
Then we have $\underline{D}(E)>0$.

Fix $0<\varepsilon<\frac{1}{p_0^{\ell+1}}$ and $0<\delta<\frac{c_0}{M^{\ell+1}}$. For any $s\in\{1,2,\dotsc,M\}^{\ell+1}$ and  $n\in\bbn$, let
$$
H_s^n=
\biggl\{z\in X: \biggl|\mu_z(U_s^n)-\mu_z(U_{s(1)})\prod_{i=1}^{\ell}
\mu_z(T^{-a_i(n)}U_{s(i+1)})\biggr|<\varepsilon\biggr\},
$$
and $F_s=\{n\in\bbn:\mu(H_s^n)>1-\delta\}$.
By Proposition \ref{prop:Pinsker-density}, $F_s$ has density $1$.
 Let $$F:=\bigcap\limits_{s\in\{1,2,\dotsc,M\}^{\ell+1}} F_s.$$
Then $F$ also has density $1$.
In particular, $E\cap F\neq\emptyset$.

Choose $L\in E\cap F$ and put \[H=\bigcap\limits_{s\in\{1,2,\dotsc,M\}^{\ell+1}}H_s^L.\]
Since $\mu(H_s^L)>1-\delta$,
one has $\mu(H)>1-M^{\ell+1}\delta>0$.
Let
\[
\Omega=H\cap\Omega_{p_0}\cap\bigcap\limits_{i=1}^\ell T^{-a_i(L)}\Omega_{p_0}.\]
Since $L\in E$, one has $\mu(\Omega)>c_0-M^{\ell+1}\delta>0$.
Choose $0<c<\frac{1}{p_0^{\ell+1}}-\varepsilon$.
Then for any $z\in\Omega$ and $s\in\{1,2,\dotsc,M\}^{\ell+1}$,
$$\mu_z(U_s^L)>\mu_z
(U_{s(1)})\prod_{i=1}^\ell\mu_{T^{a_i(L)}z}(U_{s(i+1)})-
\varepsilon\geq \frac{1}{p_0^{\ell+1}}-\varepsilon>c.$$
This finishes our proof.
\end{proof}

Let $(X,T)$ be a topological dynamical system.
The topological entropy of $(X,T)$ is denoted by $h_{top}(X,T)$.
For an invariant measure $\mu\in \mathcal{M}(X,T)$,
the measure-theoretic entropy of $\mu$ is denoted by $h_\mu(X,T)$.
We have the following classical  variational principle between topological entropy and measure-theoretic entropy holds (see e.g. \cite{Glasner,W82})
$$
h_{top}(X,T)=
\sup_{\mu\in\mathcal{M}(X,T)}h_{\mu}(X,T)
=\sup_{\mu\in\mathcal{M}^e(X,T)}h_{\mu}(X,T).
$$

We will also need the following result, see  \cite[Lemma 3.1]{Dou-Ye-Zhang} and \cite[Lemma 4.3]{Huang-Xu-Yi}.

\begin{thm}\label{ergodic}
Let $(X,T)$ be a dynamical system,
$\mu\in \mathcal{M}^e(X,T)$  and $P_{\mu}(T)$ be the Pinsker $\sigma$-algebra of $(X,\mathscr{B},\mu,T)$.
For $n\in\bbn$ with $n\geq 2$, let $\lambda_n=\mu\times_{P_{\mu}(T)}\mu\times_{P_{\mu}(T)} \dotsb\times_{P_{\mu}(T)} \mu$ ($n$-times)
and $\Delta_X^n=\{(x_1,\dotsc,x_n)\in X^n\colon \exists 1\leq i<j\leq n
\text{ s.t. }x_i=x_j\}$. Then
\begin{enumerate}
\item $\lambda_n\in \mathcal{M}^e(X^n,T^{(n)})$, where $T^{(n)}=T\times T\times\dotsb\times T$ ($n$-times);
\item if $h_{\mu}(X,T)>0$, then $\lambda_n(\Delta_X^n)=0$.
\end{enumerate}
\end{thm}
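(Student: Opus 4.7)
The plan is to establish the two assertions in order, with (1) providing the key input for (2). For part (1), I would invoke the Rokhlin--Sinai theorem (Theorem~\ref{Rohlin-Sinai} in the paper) to conclude that the factor map $\pi \colon (X,\mathscr{B},\mu,T) \to (X, P_\mu(T), \mu|_{P_\mu(T)}, T)$ is relatively weakly mixing; indeed, the filtration $\{T^k\mathscr{P}\}_{k\in\bbz}$ with $\bigcap_k T^{-k}\mathscr{P} = P_\mu(T)$ and $\bigvee_k T^k\mathscr{P} = \mathscr{B}$ is, after passing to the orthogonal complement of $L^2(P_\mu(T))$, a filtration with trivial tail, which via a standard spectral argument is equivalent to the relative Kronecker factor over $P_\mu(T)$ being trivial, i.e., to relative weak mixing. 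Since $\mu$ is ergodic, so is the restriction $\mu|_{P_\mu(T)}$. A classical result from joining theory (see Furstenberg's book or \cite{Glasner}) then gives that whenever an extension of an ergodic system is relatively weakly mixing, each of its relatively independent self-joinings $\lambda_n$ is ergodic under $T^{(n)}$; this can either be cited directly or proved by induction on $n$, using the fact that relative weak mixing is preserved when one of the factors is replaced by a relatively independent self-joining, so that $\lambda_{n+1} = \lambda_n \times_{P_\mu(T)} \mu$ inherits ergodicity.

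For part (2), I would first prove the case $n=2$ and then derive the general case from it by a marginal argument. By part (1), $\lambda_2 = \mu \times_{P_\mu(T)} \mu$ is $(T\times T)$-ergodic. The diagonal $\Delta_X^2$ is $T\times T$-invariant, so ergodicity forces $\lambda_2(\Delta_X^2) \in \{0,1\}$. Disintegrating
\[
\lambda_2(\Delta_X^2) = \int_X (\mu_x \times \mu_x)(\Delta_X^2)\, d\mu(x)
= \int_X \sum_{y \text{ atom of } \mu_x} \mu_x(\{y\})^2 \, d\mu(x),
\]
one sees that $\lambda_2(\Delta_X^2)=1$ is equivalent to $\mu_x$ being a Dirac mass for $\mu$-a.e.\ $x$, which in turn is equivalent to $P_\mu(T) = \mathscr{B}$ modulo $\mu$-null sets, and hence to $h_\mu(X,T)=0$. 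This contradicts the hypothesis $h_\mu(X,T)>0$, so $\lambda_2(\Delta_X^2) = 0$.

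For general $n \geq 2$, decompose $\Delta_X^n = \bigcup_{1 \leq i<j \leq n} D_{i,j}$ with $D_{i,j} = \{(x_1,\dots,x_n) : x_i = x_j\}$. Since $\lambda_n = \int_X \mu_x^{\otimes n}\, d\mu(x)$, projecting onto the coordinates $(i,j)$ gives the two-dimensional marginal $\int_X \mu_x \times \mu_x\, d\mu(x) = \lambda_2$, so $\lambda_n(D_{i,j}) = \lambda_2(\Delta_X^2) = 0$; a union bound over the $\binom{n}{2}$ pairs yields $\lambda_n(\Delta_X^n) = 0$.

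The main obstacle, I expect, is the input to part (1): relative weak mixing of the Pinsker extension and the ergodicity of its higher self-joinings are classical but not developed in the excerpt, so a careful write-up would either cite a textbook treatment (e.g.\ \cite{Glasner}) or include the short spectral argument using Rokhlin--Sinai's filtration. Once part (1) is in hand, part (2) requires only the ergodicity trichotomy for the diagonal and the marginal identity for $\lambda_n$, both of which are direct.
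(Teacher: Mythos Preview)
The paper does not prove Theorem~\ref{ergodic} at all; it is quoted as a known result with references to \cite[Lemma~3.1]{Dou-Ye-Zhang} and \cite[Lemma~4.3]{Huang-Xu-Yi}. So there is no ``paper's own proof'' to compare against, and your proposal goes beyond what the paper does by actually sketching an argument.

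That said, your outline is correct and follows the standard route one finds in those cited sources. For part~(1), the heart of the matter is that the extension $(X,\mathscr{B},\mu,T)\to (X,P_\mu(T),\mu,T)$ is relatively weakly mixing; this is classically deduced from the fact that it is a relative $K$-extension (equivalently, has relatively completely positive entropy), which is exactly what the Rokhlin--Sinai filtration encodes. Your phrase ``via a standard spectral argument'' is the one place that would need expansion in a full write-up: the clean way is to observe that any nontrivial relative Kronecker (isometric) extension over $P_\mu(T)$ would have zero relative entropy and hence be absorbed into $P_\mu(T)$, so the relative Kronecker factor is trivial, which is equivalent to relative weak mixing. The induction giving ergodicity of $\lambda_n$ from relative weak mixing over an ergodic base is textbook (\cite{F81}, \cite{Glasner}).

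Your part~(2) is entirely correct: the $0$--$1$ dichotomy for the invariant set $\Delta_X^2$ under the ergodic measure $\lambda_2$, the identification of $\lambda_2(\Delta_X^2)=1$ with $\mu_x$ being Dirac for $\mu$-a.e.\ $x$ (hence $P_\mu(T)=\mathscr{B}$ mod null sets and $h_\mu=0$), and the reduction of general $n$ to $n=2$ via two-coordinate marginals are all valid and are in fact how the cited references proceed.
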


After the above preparation, now we are ready to prove the main result.
\begin{proof}[Proof of Theorem \ref{main-thm2}]
Since $h_{top}(X,T)>0$, by the  variational principle there exists $\mu\in \mathcal{M}^e(X,T)$ such that $h_{\mu}(X,T)>0$. Let $P_{\mu}(T)$ be the Pinsker $\sigma$-algebra of $(X,\mathscr{B},\mu,T)$, $\mu=\int\mu_zd\mu$ the disintegration of $\mu$ over $P_{\mu}(T)$. Denote $\lambda=\mu\times_{P_{\mu}(T)}\mu$, then by Theorem \ref{ergodic} $\lambda$ is ergodic measure on $X\times X$ and $\lambda(\Delta_X^2)=0$. Thus there exist non-empty closed subsets $A_1,A_2$ of $X$, with $A_1\cap A_2=\emptyset$ and $diam(A_i)<\frac{1}{2}$ for $i=1,2$,  such that
$$0<\lambda(A_1\times A_2)=\int \mu_z\times\mu_z(A_1\times A_2)d\mu(z).$$
Then there exists $c_1>0$ such that
$\mu\big(\{z\in X:\mu_z(A_i)>c_1,i=1,2\}\big)>0$. Let $\mathscr{E}_1=\{1,2\}$ and $\mathscr{E}_{k+1}=\mathscr{E}_k\times\mathscr{E}_k\times\dotsb\times\mathscr{E}_k$ ($k+1$-times), for any $k\geq 1$. By Lemma~\ref{lem:key-lemma} and induction, we can construct non-empty closed subsets $A_{\sigma}$ of $X$ for each $\sigma\in\mathscr{E}_k$, $k\in\bbn$ with the following properties:
\begin{enumerate}
  \item for any $k>1$, there exists $L_k\in\bbn$, and a non-empty closed subset $A_{\sigma}$ of $X$ for any $\sigma=(\sigma(1),\sigma(2),\dotsc,\sigma(\ell))\in \mathscr{E}_{k}$, where $\sigma(i)\in \mathscr{E}_{k-1}, i=1,2,\dotsc,\ell$, such that
$$A_\sigma\subset A_{\sigma(1)}\cap\bigcap_{i=1}^\ell T^{-a_i(L_k)}A_{\sigma(i+1)}.$$

  \item diam$( A_{\sigma}) < 2^{-k}$, for all $\sigma\in \mathscr{E}_{k}$, $k>1$.
  \item for any $k\in\bbn$, there exists $c_k>0$ such that
$$\{z\in X: \mu_z(A_{\sigma})>c_k, \text{ for all }\sigma\in \mathscr{E}_{k}\}$$
has positive measure.
\end{enumerate}

Let $A:=\bigcap\limits_{k=1}^{\infty}\bigcup_{\sigma\in\mathscr{E}_k}A_{\sigma}$.
Now we shall show that $A$ is a $\Delta$-weakly mixing subset of $(X,T)$ along the collection $\Lambda$ of sequences
using Proposition \ref{prop1}.
For any $k\in\bbn$, $A_{\sigma}$ for all $\sigma\in\mathscr{E}_k$ are pairwise disjoint because $A_1, A_2$ are disjoint and of property (1).
Thus $A$ is a Cantor set.
For any $n\in\bbn$, non-empty open subsets $V_1,V_2,\dotsc,V_n$ and $U_{i,j}$, $i\in\{1,2,\dotsc,\ell\}$, $j\in\{1,2,\dotsc,n\}$ of $X$ intersecting $A$, there exists $K\in\bbn$ large enough such that we can choose $\sigma_1,\sigma_2,\dotsc,\sigma_n$, $\sigma_{i,j}$, $i=\{1,2,\dotsc,\ell\}$, $j=\{1,2,\dotsc,n\}$ in $\mathscr{E}_K$, such that $A_{\sigma_j}\subset V_j\cap A$, $A_{\sigma_{ij}}\subset U_{ij}\cap A$ for $i=1,2,\dotsc,\ell$, $j=1,2,\dotsc,n$. For every $(\sigma_i,\sigma_{1,i},\dotsc,\sigma_{\ell, i})\in\mathscr{E}_{K+1}$,
$i\in\{1,2,\dotsc,n\}$, one has
$$\emptyset\neq A_{(\sigma_i,\sigma_{1,i},\dotsc,\sigma_{\ell, i})}\subset A_{\sigma_i}\cap \bigcap\limits_{j=1}^{\ell} T^{-a_j(L_{K+1})}A_{\sigma_{j,i}}.$$
Thus
$$
(V_i\cap A)\cap\bigcap\limits_{j=1}^\ell T^{-a_j(L_{K+1})}U_{j,i}\neq\emptyset
$$
$i=1,2,\dotsc,n$.
This means $L_{K+1}\in N_{\Lambda}(V_i\cap A;U_{1,i},U_{2,i},\dotsc,U_{\ell,i})$ for $i=1,2,\dotsc,n$. By Proposition \ref{prop1}, $A$ is a $\Delta$-weakly mixing subset of $(X,T)$ along the collection $\Lambda$ of sequences.
\end{proof}

Now by Theorem~\ref{main-thm2} and Proposition~\ref{Li-Yorke chaotic}, we have the following corollary, which shows that positive topological entropy implies multi-variant Li-Yorke chaos along polynomial times of the shift prime numbers.

\begin{cor}
If a topological dynamical system  $(X,T)$ has positive topological entropy, then for any $\ell\in\bbn$ with $\ell\geq 2$ and any polynomials $p_1(n),\dotsc,p_\ell(n)$ with rational coefficients taking integer values on the integers and $p_i(0) = 0$ for $i=1,\dotsc,\ell$,
there exists a Cantor subset $C$ of $X$ such that for every pairwise distinct points $x_1,x_2,\dotsc,x_\ell \in C$, we have
$$\limsup_{\mathbb{P}\ni k\to\infty}\min_{1\leq i<j\leq\ell}\rho(T^{p_i(k-1)}x_i,T^{p_j(k-1)}x_j)>0$$
and
$$\liminf_{\mathbb{P}\ni k\to\infty}\max_{1\leq i<j\leq\ell}\rho(T^{p_i(k-1)}x_i,T^{p_j(k-1)}x_j)=0.$$
\end{cor}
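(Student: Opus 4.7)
The plan is to apply Theorem~\ref{main-thm2} and Proposition~\ref{Li-Yorke chaotic} to a collection $\Lambda$ of sequences cooked up from the given polynomials and the primes. After possibly reindexing, I may assume the polynomials are labelled so that $p_{i+1}(n)-p_i(n)\to+\infty$ as $n\to+\infty$ (the pairwise differences are distinct non-zero integer-valued polynomials, so their eventual monotone behavior puts a linear order on $\{p_1,\dotsc,p_\ell\}$). Enumerate $\mathbb{P}=\{q_1<q_2<\cdots\}$ in increasing order and set $a_i(k):=p_i(q_k-1)$ and $\Lambda:=\{a_1,\dotsc,a_\ell\}$. A mild technical point is that some $a_i(k)$ may be negative if $p_i$ has negative leading coefficient, but this is harmless: $T$ is a homeomorphism, so $T^{a_i(k)}$ makes sense as an integer power, and the paper's main results extend verbatim from $\bbn$-valued to $\bbz$-valued sequences since their proofs use only that $T$ is invertible and measure-preserving.

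Next I check the two assumptions of Theorem~\ref{main-thm2} for $\Lambda$. Goodness for $\liminf$-$\ell$-recurrence is precisely the theorem of Wooley and Ziegler quoted in the introduction. For Condition $(**)$, the divergence $a_{i+1}(k)-a_i(k)=(p_{i+1}-p_i)(q_k-1)\to+\infty$ follows from my ordering together with $q_k\to\infty$; Condition $(*)$ for $a_i$ reduces to a counting argument: since $p_i$ is non-constant, for any $L>0$ there exists $L_i>0$ such that $|p_i(x)-p_i(y)|\leq L$ with $\min(x,y)$ large forces $|x-y|\leq L_i$ (via eventual monotonicity and the mean value theorem; in fact $L_i=1$ works when $\deg p_i\geq 2$). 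Since the primes are distinct integers, each interval of length $2L_i$ contains at most $2L_i+1$ of them, so the number of pairs $(m,n)\in[1,N]^2$ with $|a_i(m)-a_i(n)|\leq L$ is $O(L_iN)$, of density $0$.

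With the hypotheses of Theorem~\ref{main-thm2} verified, it yields a $\Delta$-weakly mixing subset $E$ of $(X,T)$ along $\Lambda$. Since $\ell\geq 2$, the set $E$ is perfect and in particular has at least two points, so Proposition~\ref{Li-Yorke chaotic} (taking $n=\ell$) produces an uncountable $\Delta$-Li-Yorke $\ell$-chaotic set along $\Lambda$; inspection of the proof shows this set contains the first piece $C_1$ of the nested union of Cantor sets supplied by Theorem~\ref{thm-Delta-weak-mixing}, which I take as the desired Cantor subset $C$. For pairwise distinct $x_1,\dotsc,x_\ell\in C$ the $\Delta$-Li-Yorke inequalities read
\[\limsup_{k\to\infty}\min_{i<j,\,p,q}\rho(T^{a_p(k)}x_i,T^{a_q(k)}x_j)>0\quad\text{and}\quad \liminf_{k\to\infty}\max_{i<j,\,p,q}\rho(T^{a_p(k)}x_i,T^{a_q(k)}x_j)=0.\]
Restricting $(p,q)$ to the diagonal $(p,q)=(i,j)$ enlarges the inner min and shrinks the inner max, so the same inequalities persist with $\min_{i<j}\rho(T^{a_i(k)}x_i,T^{a_j(k)}x_j)$ and $\max_{i<j}\rho(T^{a_i(k)}x_i,T^{a_j(k)}x_j)$ respectively, and reindexing $k\mapsto q_k$ converts the $\bbn$-limit into the $\mathbb{P}$-limit appearing in the corollary.

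The main technical obstacle is the verification of Condition $(*)$ together with the implicit extension of the framework to integer-valued sequences; once those are in place, the conclusion is essentially a direct application of Theorem~\ref{main-thm2} followed by the diagonal specialization trick in the last step.
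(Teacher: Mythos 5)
Your proposal is correct and takes essentially the same route as the paper, whose proof of this corollary is precisely an application of Theorem~\ref{main-thm2} and Proposition~\ref{Li-Yorke chaotic} to the collection $\Lambda=\{p_i(q_k-1)\}$ built from the shifted primes, with goodness for $\liminf$-$\ell$-recurrence supplied by the Wooley--Ziegler theorem. You additionally spell out details the paper leaves implicit (reordering the polynomials so consecutive differences diverge, verifying Condition $(*)$ via the growth of non-constant polynomials, allowing $\bbz$-valued exponents, extracting the Cantor set $C_1$, and the diagonal restriction of the min/max), and all of these check out.
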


\bibliographystyle{amsplain}

\begin{thebibliography}{10}

\bibitem{Assani}
I. Assani, \textit{Multiple recurrence and almost sure convergence for weakly mixing dynamical systems}, Israel J. Math. \textbf{103} (1998), 111--124.

\bibitem{Bereelson-Leibman}
V. Bergelson, A. Leibman, \textit{Polynomial extensions of van der Waerden's and Szemer\'edi's theorems}, J. Amer. Math. Soc. \textbf{9} (1996), no. 3, 725--753.

\bibitem{Blanchard-Glansner-Kolyada-Maass}
F. Blanchard, E. Glasner, S. Kolyada, A. Maass, \textit{On Li-Yorke pairs}, J. Reine Angew. Math. \textbf{547} (2002) 51--68.

\bibitem{Blanchard-Huang}
F. Blanchard, W. Huang,
\textit{Entropy sets, weakly mixing sets and entropy capacity},
Discrete Contin. Dyn. Syst. \textbf{20} (2008), no. 2, 275--311.

\bibitem{Derrien-Lesigne}
J.M. Derrien, E. Lesigne,
\textit{A pointwise polynomial ergodic theorem for exact endomorphisms and $K$-systems},
Ann. Inst. H. Poincar\'e Probab. Statist. \textbf{32} (1996), no. 6, 765--778.

\bibitem{Danilenko}
A. Danilenko, \textit{Entropy theory from orbit point of view}, Monatsh. Math. \textbf{134} (2001), 121--141.

\bibitem{Dou-Ye-Zhang}
D, Dou, X. Ye, G. Zhang, \textit{Entropy sequences and maximal entropy sets}, Nonlinearity \textbf{19} (2006), no. 1, 53--74.

\bibitem{D14} T. Downarowicz,  \textit{Positive topological entropy implies chaos DC2}, Proc. Amer. Math. Soc. \textbf{142} (2014), no. 1, 137--149.

\bibitem{Einsiedler-Ward}
M. Einsiedler, T. Ward, \textit{Ergodic Theory with a View Towards Number Theorey}, Graduate Texts in Mathematics, vol. 259. Springer, London, 2011.

\bibitem{F16} N. Frantzikinakis,
\textit{Some open problems on multiple ergodic averages}, Bull. Hellenic Math. Soc. \textbf{60} (2016), 41--90.

\bibitem{FHK13}
N. Frantzikinakis, B. Host, B. Kra,
\textit{The polynomial multidimensional Szemer\'edi theorem along shifted primes},
Israel J. Math. \textbf{194} (2013), no. 1, 331--348.

\bibitem{F77} H. Furstenberg,
\textit{Ergodic behavior of diagonal measures and a theorem of Szemer\'edi on arithmetic progressions},
 J. Analyse Math. \textbf{31} (1977), 204--256.

\bibitem{F81} H. Furstenberg,
\textit{Recurrence in ergodic theory and combinatorial number theory}, Princeton University Press, Princeton, N.J., 1981.


\bibitem{G94} E. Glasner,
\textit{Topological ergodic decompositions and applications to products of powers of a minimal transformation}, J. Anal. Math. \textbf{64} (1994) 241--262.

\bibitem{Glasner}
E. Glasner, \textit{Ergodic Theory via Joinings}, Math. Surveys Monoger., vol. 101, American Mathematical Society, Providence, RI, 2003.

\bibitem{Glasner-Thouvenot-Weiss}
E. Glasner, J.P. Thouvenot, B. Weiss,
\textit{Entropy theory without a past},
Ergodic Theory Dynam. Systems \textbf{20} (2000), no. 5, 1355--1370.

\bibitem{HLY14}
W. Huang, J. Li, X. Ye,
\textit{Stable sets and mean Li-Yorke chaos in positive entropy systems}, J. Funct. Anal. \textbf{266} (2014), no. 6, 3377--3394.

\bibitem{HLYZ17}
W. Huang, J. Li, X. Ye, X. Zhou,
\textit{Positive topological entropy and $\Delta$-weakly mixing sets}, Adv. Math. \textbf{306} (2017), 653--683.

\bibitem{HLY18}
W. Huang, J. Li, X. Ye, \textit{Positive entropy implies chaos along any infinite sequence}, preprint, 2020,  arXiv:2006.09601.

\bibitem{HSY16}
W. Huang, S. Shao, X. Ye,
\textit{Topological correspondence of multiple ergodic averages of nilpotent group actions},  J. Anal. Math. \textbf{138} (2019), no. 2, 687--715.

\bibitem{Huang-Xu-Yi}
W. Huang, L. Xu, Y. Yi, \textit{Asymptotic pairs, stable sets and chaos in positive entropy systems}, J. Funct. Anal. \textbf{268} (2015), no.4, 824--846.

\bibitem{Huang-Ye1}
W. Haung, X. Ye, \textit{Devaney's chaos or 2-scattering implies Li-Yorke's chaos}, Topolgy Appl. \textbf{117} (2002), no. 3, 259--272.

\bibitem{HY06} W. Huang, X. Ye, \textit{A local variational relation and applications}, Israel J. Math. \textbf{151} (2006), 237--280.

\bibitem{Iwanik}
A. Iwanik, \textit{Independence and scrambled sets for chaotic mappings}, In: The mathematical heritage of C. F. Gauss, World Sci. Publ., River Edge, NJ, 1991, 372--378.

\bibitem{Kerr-li}
D. Kerr, H. Li, \textit{Independence in topological and C$*$-dynamics} Math. Ann. \textbf{338} (2007), no.4, 869--926.

\bibitem{L15} J. Li, \textit{Localization of mixing property via Furstenberg families} Discrete Contin. Dyn. Syst. \textbf{35} (2015), no. 2, 725--740.

\bibitem{Li-Qiao}
J. Li, Y. Qiao, \textit{Mean Li-Yorke chaos along some good sequences}, Monatsh. Math. \textbf{186} (2018), no. 1, 153--173.

\bibitem{LY16} J. Li, X. Ye, \textit{Recent development of chaos theory in topological dynamics} ,
Acta Math. Sin. (Engl. Ser.) \textbf{32} (2016), no. 1, 83--114.


\bibitem{L19} K. Liu, \textit{$\Delta$-weakly mixing subset in positive entropy actions of a nilpotent group}, J. Differential Equations
\textbf{267} (2019), no. 1, 525--546.

\bibitem{M10} T.K.S. Moothathu, \textit{Diagonal points having dense orbit}, Colloq. Math. \textbf{120} (2010), 127--138.

\bibitem{Piotr-Zhang(1)}
P. Oprocha, G. Zhang, \textit{On local aspects of topological weak mixing in dimension one and beyond}, Studia Math. \textbf{202} (2011), no. 3, 261-288.

\bibitem{Piotr-Zhang(2)}
P. Oprocha, G. Zhang, \textit{On local aspects of topological weak mixing, sequence entropy and chaos}, Ergodic Theory Dynam. Systems \textbf{34} (2014), no. 5, 1615--1639.

\bibitem{Piotr-Zhang(3)}
P. Oprocha, G. Zhang, \textit{Topological aspects of dynamics of pairs, tuples and sets}, Recent progress in general topology. III, 665--709, Atlantis Press, Paris, 2014.

\bibitem{Rokhlin-Sinai}
V.A. Rokhlin, Y.G. Sinai, \textit{The construction and properties of invariant measurable partitions}, Dokl. Akad. Nauk SSSR, \textbf{141} (1961), no. 5, 1038--1041.

\bibitem{W82} P. Walter, \textit{An introduction to ergodic theory}, Graduate Texts in Mathematics, 79. Springer-Verlag, New York-Berlin, 1982.

\bibitem{Wang-Zhang}
Z. Wang, G. Zhang, \textit{Chaotic behavior of group actions}, Dynamics and numbers, 299--315, Contemp. Math., 669, Amer. Math. Soc., Providence, RI, 2016.

\bibitem{Wooley-Ziegler}
T. Wooley, T. Ziegler, \textit{Multiple recurrence and convergence along the primes}, Amer. J. Math. \textbf{134} (2012), no. 6, 1705--1732.

\bibitem{Xiong-Yang}
J. Xiong, Z. Yang, \textit{Chaos caused by a topologically mixing map}, In: Dynamical systems and related topics (Nagoya, 1990), Adv. Ser. Dynam. Systems, Vol.9, 550-572, World Sci. Publ., River Edge, NJ, 1991.

\bibitem{ZWX19} S. Zhang, H. Wang, R. Xie, \textit{Localization of $\Delta$-mixing property via Furstenberg families}, Discrete Dyn. Nat. Soc. 2019, Art. ID 6759849, 5 pp.

\end{thebibliography}

\end{document}